\documentclass[11pt]{article}

\usepackage[latin1]{inputenc}
\usepackage{t1enc}
\usepackage{amsmath, amssymb, amsfonts, amsthm, amsopn,epsfig}
\usepackage[none]{hyphenat}
\usepackage{float}
\usepackage{graphicx}
\usepackage{caption}
\usepackage[toc,page]{appendix}
\usepackage{epstopdf}
\usepackage{etoolbox}
\usepackage[colorlinks=true]{hyperref}
\usepackage{dsfont}
\hypersetup{
	colorlinks=true,
	linkcolor=blue,
	filecolor=magenta,
	urlcolor=cyan,
	citecolor=blue
}
\usepackage{cleveref}
\usepackage[top=21mm, bottom=22mm, left=21mm, right=21mm]{geometry}
\usepackage{comment}

\theoremstyle{plain}
\newtheorem{theorem}{Theorem}[section]

\newtheorem{corollary}[theorem]{Corollary}
\newtheorem{claim}[theorem]{Claim}
\newtheorem{lemma}[theorem]{Lemma}

\theoremstyle{definition}

\DeclareMathOperator{\surp}{surp}
\DeclareMathOperator{\trace}{tr}

\newcommand{\cF}{\mathcal{F}}

\newcommand{\bR}{\mathbb{R}}

\newcommand{\eps}{\varepsilon}

\newcommand{\hide}[1]{}

\title{Beyond the MaxCut problem in $H$-free graphs}
\author{Zhihan Jin\thanks{Department of Mathematics, ETH Z\"urich, Switzerland. Email: {\tt \{zhihan.jin, aleksa.milojevic\}@math.ethz.ch}. Research supported in part by SNSF grant 200021-228014.}
\and Aleksa Milojevi\'c\footnotemark[1]
\and 
Istv\'an Tomon\thanks{Ume\r{a} University, \emph{e-mail}: \texttt{istvantomon@gmail.com}, Research supported in part by the Swedish Research Council grant VR 2023-03375.}}

\date{}

\begin{document}

\maketitle

\begin{abstract}
In a recent breakthrough, Zhang proves that if $G$ is an $H$-free graph with $m$ edges, then $G$ has a cut of size at least $m/2+c_Hm^{0.5001}$, making a significant step towards a well known conjecture of Alon, Bollob\'as, Krivelevich and Sudakov. We show that the methods of Zhang can be further boosted, and prove the following strengthening. If $G$ is a graph with $m$ edges and no clique of size $m^{1/2-\delta}$, then $G$ has a cut of size at least $m/2+m^{1/2+\eps}$ for some $\eps=\eps(\delta)>0$.

In addition, we sharpen another result of Zhang by proving that if $G$ is an $n$-vertex $m$-edge graph with MaxCut of size at most $m/2+n^{1+\eps}$ (or its smallest eigenvalue $\lambda_n$ satisfies $|\lambda_n|\leq n^{\eps}$), then $G$ is $n^{-\eps}$-close to the disjoint union of cliques for some absolute constant $\eps>0$.
\end{abstract}

\section{Introduction}

Given a graph $G$, a \emph{cut} in $G$ is a partition $(U,V)$ of the vertex set together with all the edges having exactly one endpoint in both parts. The size of the cut is the number of its edges. The \emph{MaxCut} of $G$ is the maximum size of a cut, denoted by $\mbox{mc}(G)$. The MaxCut is among one the most studied 
graph parameters both in extremal combinatorics and theoretical computer science. In this manuscript, we study the extremal and structural properties of graphs with small MaxCut, continuing a line of research initiated in the 1970's.

A simple probabilistic argument shows that every graph with $m$ edges has a cut of size at least $m/2$. Indeed, a random cut, chosen from the uniform distribution on all cuts, has size $m/2$ in expectation. A fundamental result of Edwards \cite{Edwards1,Edwards2} states that this bound can be improved to $\frac{m}{2}+\frac{\sqrt{8m+1}-1}{8}$, which is sharp if the graph is a clique on an odd number of vertices. In general, if $G$ is a disjoint union of constantly many cliques, then the MaxCut of $G$ is of size $m/2+O(\sqrt{m})$. This raises the following natural question. Can this bound be improved if $G$ is ``far'' from a disjoint union cliques? As we are interested in the size of the MaxCut above $m/2$, we focus on the notion of \emph{surplus}, defined as  $\surp(G)=\mbox{mc}(G)-m/2$.

\subsection{MaxCut in H-free graphs}

One way to ensure that a graph is far from a disjoint union of cliques is to assume that it does not contain some fixed graph $H$ as a subgraph. The study of the size of the MaxCut, and in turn the surplus, in such graphs was initiated by Erd\H{o}s and Lov\'asz in the 1970's (see \cite{erdos}). One of the first major results in the area is due to Alon \cite{AlonMaxCut}, who proved that if a graph $G$ has $m$ edges and no triangles, then $\surp(G)=\Omega(m^{4/5})$, and this bound is tight. The size of the surplus in graphs without short cycles is studied in \cite{ABKS,AKS05,BJS,GJS}, where \cite{BJS,GJS} achieve tight bounds for this problem. 
On the other hand, finding the size of the minimum surplus in graphs avoiding $K_r$, the complete graph on $r$ vertices, seems to be much more difficult. Alon, Bollob\'as, Krivelevich, and Sudakov \cite{AKS05} proved that for every $r$, there exists $\eps_r>0$ such that every $K_r$-free graph has surplus at least $m^{1/2+\eps_r}$. This was improved by Carlson, Kolla, Li, Mani, Sudakov, and Trevisan \cite{CKLMST}, and then by Glock, Janzer, and Sudakov \cite{GJS} proved the bound $m^{1/2+\frac{3}{4r-2}}$. However, it is conjectured in \cite{ABKS} that the answer should be $m^{3/4+\eps_r}$ for some $\eps_r>0$. This conjecture is still wide open, and for a long time, it was a tantalizing open problem to find any absolute constant $\eps>0$ (independent of $r$), such that every $K_r$-free graph has surplus $\Omega_r(m^{1/2+\eps})$. In a recent breakthrough, Zhang \cite{Zhang} proved exactly this, by showing that one can take $\eps=0.001$.

Forbidding a fixed graph $H$ as a subgraph is a very strong, local restriction on the graph. It seemed plausible that much weaker global restrictions might also force large surplus. One such result is due to Glock, Janzer, and Sudakov \cite{GJS} who show that the surplus depends sensitively on the number of triangles. Another global result, due to R\"aty, Sudakov, and Tomon \cite{RST} shows that regular graphs of density between $1/2+\delta$ and $1-\delta$ have surplus $\Omega(m^{5/8})$. In the very extreme case, Balla, Hambardzumyan, and Tomon \cite{BHT} showed that graphs with clique number $o(\sqrt{m})$ already have surplus $\omega(m^{1/2})$. Here, we greatly strengthen the result of Zhang \cite{Zhang} and the aforementioned result of \cite{BHT} by proving that clique number $m^{1/2-\delta}$ already implies that the surplus is at least $m^{1/2+\eps}$ for some $\eps=\eps(\delta)>0$.

\begin{theorem}\label{thm:maxcut}
For very $\eps>0$ there exists $\delta>0$ such that the following holds for every sufficiently large $m$. Let $G$ be a graph with $m$ edges such that $G$ contains no clique of size $m^{1/2-\delta}$. Then $G$ has a cut of size at least $m/2+m^{1/2+\eps}$. 
\end{theorem}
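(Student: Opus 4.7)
The plan is to argue by contradiction, using the structural theorem announced in the abstract together with a direct surplus computation for near-disjoint unions of small cliques. Suppose $G$ has $m$ edges, contains no clique of size $s := m^{1/2-\delta}$, yet $\surp(G) < m^{1/2+\eps}$. The classical spectral lower bound $\surp(G) \geq \tfrac{n|\lambda_n(G)|}{4}$, combined with the Tur\'an estimate $n \geq \sqrt{2m}$ that follows from $K_s$-freeness, yields $|\lambda_n(G)| = O(m^\eps)$; choosing $\eps$ sufficiently small compared to the absolute constant $\eps_0 > 0$ of the structural theorem then places $G$ in its hypothesis and produces a partition $V(G) = V_1 \sqcup \dots \sqcup V_k$ for which $G$ is $n^{-\eps_0}$-close to $G' := \bigsqcup_i K_{V_i}$.

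Next I extract two geometric consequences by combining the partition with the $K_s$-freeness. Writing $r_i := |E(K_{V_i}) \setminus E(G)|$ for the number of missing within-part edges, each $|V_i| \leq s + O(\sqrt{r_i})$, since one can cover all missing edges in $G[V_i]$ by deleting $O(\sqrt{r_i})$ vertices and the resulting clique in $G$ must have size $< s$. After a routine cleanup step that discards any anomalous parts with too many missing edges (whose total contribution is controlled using $K_s$-freeness), we may assume $|V_i| \leq (1+o(1))s$ uniformly. Consequently, packing $\approx m$ edges into cliques of size at most $(1+o(1))s$ forces the number of vertices in non-singleton cliques to satisfy $n_{\geq 2} \gtrsim m/s = m^{1/2+\delta}$ --- the crucial gain over the bare Tur\'an bound $n \geq \sqrt{2m}$.

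I then estimate $\surp(G)$ from below using the partition. Selecting the optimal cut inside each $G[V_i]$ and orienting the sides of each part independently and uniformly at random cuts every between-part edge with probability $\tfrac12$, so $\mbox{mc}(G) \geq e(G)/2 + \sum_i \surp(G[V_i])$, equivalently $\surp(G) \geq \sum_i \surp(G[V_i])$. A short calculation gives $\surp(G[V_i]) \geq \surp(K_{V_i}) - r_i/2 \geq (|V_i|-1)/4 - r_i/2$, and summing yields
\[
\surp(G) \;\gtrsim\; \tfrac{1}{8}\, n_{\geq 2} \;-\; \tfrac12\, D \;\gtrsim\; m^{1/2+\delta} \;-\; D,
\]
where $D := \sum_i r_i$ is bounded by the edit distance between $G$ and $G'$. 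If $D \ll m^{1/2+\delta}$, this already beats $m^{1/2+\eps}$ whenever $\delta > \eps$, the desired contradiction.

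The main obstacle is controlling $D$ finely enough: a single application of the structural theorem bounds the edit distance by only a polynomial fraction of $\binom{n}{2}$, which in general is much larger than $m^{1/2+\delta}$. To close the gap I anticipate iteratively refining the partition --- using the clique-freeness of $G$ at each stage to prevent the missing edges from concentrating on any single part, and re-invoking the structural dichotomy on the leftover near-complete pieces --- until the accumulated $D$ drops below the $m^{1/2+\delta}$ threshold. Quantifying this recursion, which is precisely the \emph{boosting} of Zhang's methods referenced in the introduction, will produce the quantitative dependence $\delta = \delta(\eps) \to 0$ as $\eps \to 0$.
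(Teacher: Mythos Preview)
Your argument has a fatal error at the outset: the inequality $\surp(G) \geq \tfrac{n|\lambda_n|}{4}$ points the wrong way. The classical bound is $\surp(G) \leq \tfrac{n|\lambda_n|}{4}$ (Claim~\ref{claim:surplus_and_lambdan}), so small surplus yields no upper bound on $|\lambda_n|$. For instance, the disjoint union of $K_n$ and $K_{t,t}$ with $t=\sqrt{n}$ has $N=n+2\sqrt{n}$ vertices, $|\lambda_N|=\sqrt{n}$, and $\surp=\Theta(n)$, so $\surp \ll N|\lambda_N|/4=\Theta(n^{3/2})$. You therefore cannot place $G$ into the hypothesis of the eigenvalue corollary.

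Even bypassing this by invoking Theorem~\ref{thm:stability} directly on the surplus assumption would be circular: in this paper Theorem~\ref{thm:stability} is deduced from Lemma~\ref{lemma:master}, which \emph{is} the substance of Theorem~\ref{thm:maxcut}. Zhang's earlier stability result is available independently, but its dependence on the closeness parameter is ineffective and would not survive a quantitative iteration.

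More importantly, the step you yourself flag as ``the main obstacle'' --- driving the edit distance $D$ below $m^{1/2+\delta}$ --- is where the entire difficulty lies, and your sketch supplies no mechanism by which a refinement gains a polynomial factor at each stage. The paper does not attempt any such decomposition. Instead it finds a \emph{single} large clique by a density-increment chain: Zhang's theorem produces an induced subgraph of density $1-10^{-5}$; a triple Hadamard-product argument (Lemma~\ref{lemma:density_increment}, iterated in Lemma~\ref{lemma:main}) boosts this to density $1-n^{-\alpha}$; a Weyl-interlacing computation on the complement (Lemma~\ref{lemma:main_very_dense}) then reaches density $1-O((\log n)^2 n^{2\eps-1})$, after which Tur\'an yields a clique of size $n^{1-O(\eps)}$. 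No edit-distance bookkeeping is involved.
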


We briefly discuss the methods used in the proof of this theorem. While early results on the surplus used mostly combinatorial and probabilistic techniques, the celebrated approximation algorithm of Goemans and Williamson \cite{GW95} introduced a semidefinite programming inspired approach. In the context of extremal properties of the MaxCut, this approach was first used by Carlson, Kolla, Li, Mani, Sudakov, and Trevisan \cite{CKLMST}, and then further refined by Glock, Janzer, and Sudakov \cite{GJS} and Balla, Janzer, and Sudakov \cite{BJS}. Later, R\"aty, Sudakov, and Tomon \cite{RST} demonstrated that one can combine this approach with spectral techniques to achieve results that are seemingly unobtainable with other methods. Finally, Zhang's \cite{Zhang} paper is a tour de force of spectral analysis of graphs. In order to prove Theorem \ref{thm:maxcut}, we build on certain results of Zhang as a blackbox, and apply spectral methods similar to the ones developed in \cite{RST}. We highlight that the result of Balla, Hambardzumyan, and Tomon \cite{BHT} finding surplus $\omega(m^{1/2})$ in graphs with clique number $o(m^{1/2})$ uses techniques that are fundamentally different from these approaches, it builds on properties of factorization norms. Also, there is no implication between this result and Theorem \ref{thm:maxcut}, despite the similarity.

\subsection{Structural properties of graphs with small cuts}

Another, more direct way to measure whether a graph is far from a disjoint union of cliques is to consider the edit-distance from a union of cliques. We say that an $n$-vertex graph $G$ is \emph{$\mu$-close} to some family of graphs $\cF$ if it is possible to add or remove at most $\mu n^2$ edges of $G$ to get a member of $\mathcal{F}$. In other words, the \emph{edit-distance} of $G$ to $\cF$ is at most $\mu n^2$.

We first heard about the following problem from Victor Falgas-Ravry (personal communication). Is it true that if an $n$-vertex graph $G$ is $\mu$-far from a disjoint union of cliques, then $G$ has surplus at least $\Omega_{\mu}(n^{3/2})$? This conjecture turns out to be false, there are several families of strongly-regular and other algebraically defined graph families achieving smaller surplus. For example, a construction of de Caen \cite{caen} related to equiangular lines achieves surplus $O(n^{5/4})$, while it is constant far from the disjoint union of cliques. A closely related conjecture of \cite{RST} is that if $G$ is $\mu$-far from a Tur\'an graph, then $G$ must have \emph{positive discrepancy} at least $\Omega(n^{5/4})$. Here, the positive discrepancy can be thought of as the surplus of the complement graph (they are equal up to constant factors if $G$ is regular, but otherwise there are some subtle differences), and the complement of a Tur\'an graph is the disjoint union of cliques of equal sizes. Zhang \cite{Zhang} made significant progress towards this conjecture as well, and also proved that every $n$-vertex graph of surplus at most $O_{\mu}(n^{1.001})$ is $\mu$-close to a disjoint union of cliques. Zhang asked whether the dependence of $O_{\mu}(n^{1.001})$ on $\mu$ can be chosen to be polynomial. We answer this question in the affirmative.

\begin{theorem}\label{thm:stability}
There exists $\eps>0$ such that the following holds for every sufficiently large $n$. If $G$ is an $n$-vertex $m$-edge graph with no cut of size larger than $m/2+n^{1+\eps}$, then $G$ is $n^{-\eps}$-close to the disjoint union of cliques.
\end{theorem}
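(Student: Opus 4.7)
The plan is to reduce to the parenthetical spectral hypothesis from the abstract, $|\lambda_n(A)| \leq n^\eps$, and then recover the clique partition via a Gram decomposition of the positive part of the adjacency matrix, in the spirit of the spectral methods of \cite{RST}. A standard spectral lower bound on the surplus (a Trevisan-type SDP rounding, as in \cite{CKLMST}, or a direct use of Zhang's analysis) converts $\surp(G) \leq n^{1+\eps}$ into $|\lambda_n(A)| \leq C n^\eps$.

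I would then decompose $A = A_+ - A_-$ into its PSD positive and negative parts. The operator bound $\|A_-\|_{\mathrm{op}} \leq Cn^\eps$ gives $\|A_-\|_F^2 \leq Cn^{1+2\eps}$ and $\trace(A_-) \leq Cn^{1+\eps}$, so $A$ is Frobenius-close to the PSD matrix $A_+$. Writing $A_+ = VV^T$ with rows $v_1,\ldots,v_n \in \bR^n$, we have $\|v_i\|^2 = (A_-)_{ii}$, $\sum_i\|v_i\|^2 \leq Cn^{1+\eps}$, and $\langle v_i, v_j\rangle = A_{ij} + (A_-)_{ij} \approx A_{ij} \in \{0,1\}$ for $i\neq j$ in an $\ell_2$ sense. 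In the ideal case of a disjoint union of cliques $C_1,\ldots,C_t$, one checks that $v_i = \sqrt{(k_\ell-1)/k_\ell}\,e_\ell$ for $i \in C_\ell$, with the $e_\ell$ orthonormal, so that vectors in the same clique are parallel and vectors in different cliques are orthogonal. Moreover, PSDness of $A_-$ gives $|(A_-)_{ij}|\leq \|v_i\|\|v_j\|$, which combined with $\langle v_i,v_j\rangle = 1+(A_-)_{ij}$ for every edge $ij \in E$ forces $\|v_i\|\|v_j\| \geq 1/2$; in particular, every edge has at least one endpoint with $\|v\| \geq 1/\sqrt{2}$.

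Next, I would form a partition greedily: repeatedly pick an unassigned vertex $i^*$ with maximum $\|v_{i^*}\|$ and collect all unassigned $j$ with $\langle v_{i^*}, v_j\rangle \geq 1/2$ into its cluster. On the ``clean'' set of pairs $(i,j)$ with $|(A_-)_{ij}| \leq 1/4$, this exactly recovers the adjacency of $G$: the inner product is in $[3/4,5/4]$ for edges and in $[-1/4,1/4]$ for non-edges. The number of non-clean pairs is at most $16 \|A_-\|_F^2 = O(n^{1+2\eps})$ by Markov's inequality, and the bound $\sum_i\|v_i\|^2 \leq Cn^{1+\eps}$ controls the edges incident to small-norm vertices (which are incorporated as singletons). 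Summing the contributions should give edit distance $O(n^{2-\eps'})$ for some $\eps' = \eps'(\eps) > 0$, as required.

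The main obstacle is rigorously carrying out the clustering step, specifically proving near-transitivity: if $\langle v_{i^*}, v_j\rangle \geq 1/2$ and $\langle v_{i^*}, v_k\rangle \geq 1/2$, then $jk \in E$ for most such pairs $(j,k)$, so that within-cluster edges are indeed close to complete. A second-moment or trace estimate on $A_-$ restricted to a candidate cluster should suffice for this. An alternative and more robust route is a bootstrap: first apply Zhang's qualitative structural theorem \cite{Zhang} with a fixed constant $\mu_0$ to produce a coarse union-of-cliques approximation, and then refine it iteratively using the spectral Gram information above, with each round polynomially reducing the edit distance until reaching $n^{-\eps}$-closeness. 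This iterative approach mirrors the spectral--combinatorial refinements developed in \cite{RST}.
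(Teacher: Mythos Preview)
Your first step is the fatal gap: there is no ``standard'' inequality of the form $\surp(G)\ge c\,n|\lambda_n|$. The known bound (Claim~\ref{claim:surplus_and_lambdan}) goes the other way, $\surp(G)\le n|\lambda_n|/4$, and SDP rounding only yields $\surp(G)\ge \Omega(\surp^*(G)/\log n)$ together with $\surp^*(G)\ge \sum_{\lambda_i<0}|\lambda_i|$, which is not $n|\lambda_n|$. For a concrete obstruction, take $G$ to be the disjoint union of $K_{n-2k}$ and $K_{k,k}$ with $k=\lfloor n^{0.4}\rfloor$: then $\surp(G)=\Theta(n)\le n^{1+\eps}$, yet $\lambda_n=-k$ and hence $|\lambda_n|=n^{0.4}\gg n^{\eps}$. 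So the operator bound $\|A_-\|_{\mathrm{op}}\le Cn^{\eps}$ on which your entire Gram/clustering scheme rests is simply unavailable. Without it you retain $\trace(A_-)\le n^{1+\eps}\log n$ from $\surp^*$, but you lose the Frobenius control you need, and the transitivity problem you yourself flag (showing that two vertices clustered to the same centre are typically adjacent) is no longer a technicality but the heart of the matter; your sketch gives no mechanism for it. The bootstrap alternative does not help either: Zhang's qualitative theorem has exponential dependence of $n_0$ on $\mu$, so iterating it cannot reach $\mu=n^{-\eps}$.

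The paper's argument never attempts to bound $|\lambda_n|$. Instead it repeatedly extracts cliques of size $n^{1-\delta}$ until almost all edges are covered (Lemma~\ref{lemma:many_cliques}, resting on the density-boosting chain Theorem~\ref{thm:zhang} $\to$ Lemma~\ref{lemma:main} $\to$ Lemma~\ref{lemma:very_dense} $\to$ Lemma~\ref{lemma:master}). Then for each pair of extracted cliques it shows, via a rank-one Boolean approximation (Lemmas~\ref{lemma:rank1_approximation} and~\ref{lemma:bipartite_complement}), that the bipartite graph between them is either nearly empty or nearly complete, and a direct surplus computation on three cliques rules out cherries, so the ``nearly complete'' relation is an equivalence.
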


\subsection{Smallest eigenvalue}

A central topic of spectral graph theory is understanding the structure of graphs, whose adjacency matrix has large smallest eigenvalue, see e.g. Koolen, Cao, and Yang \cite{KCY} for a recent survey. Let $G$ be an $n$-vertex graph and let $\lambda_n$ denote the smallest eigenvalue of its adjacency matrix. If $G$ is non-empty, then $\lambda_n\leq -1$ with equality if and only if $G$ is the disjoint union of cliques.  In the 1970's,  Cameron, Goethels, Seidel, and Shult \cite{CGSS}
gave a complete characterization of graphs satisfying  $\lambda_n\geq -2$,  while more recently,
Koolen, Yang and Yang \cite{KYY} obtained a partial characterization in the case $\lambda_n\geq -3$. Beyond these specific values, much less is known. Kim, Koolen, and Yang \cite{KKY} proved the following structure theorem for graphs satisfying $|\lambda_n|\leq \lambda$. One can find dense induced subgraph $Q_1,\dots,Q_c$ in $G$ such that each vertex lies in at most $\lambda$ of $Q_1,\dots,Q_c$, and almost all edges are covered by the union of $Q_1,\dots,Q_c$. However, the proof of this result is based on certain Ramsey theoretic arguments, and it no longer applies if $\lambda$ is polynomial in~$n$.

On the other hand, Zhang \cite{Zhang} proved that as long as $|\lambda_n|<n^{1/4-\eps}$ for some $\eps>0$, then the graph $G$ is $\mu$-close to the disjoint union of cliques for every $\mu>0$, assuming $n>n_0(\eps,\mu)$ is sufficiently large. Also, the exponent $1/4$ cannot be replaced by anything larger, since de Caen \cite{caen} constructs graphs which are $\mu$-far from a disjoint union of cliques for $\mu=\Omega(1)$ and have $|\lambda_n|=\Theta(n^{1/4})$. This result essentially resolved another conjecture of \cite{RST}, and it gives a strong structural property of dense graphs with small $|\lambda_n|$. On the other hand, this result is meaningless for sparse graphs, since any graph with fewer than $\mu n^2$ edges is $\mu$-close to a disjoint union of cliques. 

We observe that the following corollary of Theorem~\ref{thm:stability} extends the result of Zhang \cite{Zhang} to a sparser regime. 

\begin{corollary}
There exists $\eps>0$ such that the following holds for every sufficiently large $n$. If $G$ is an $n$-vertex graph with whose smallest eigenvalue $\lambda_n$ satisfies $|\lambda_n|\leq n^{\eps}$, then $G$ is $n^{-\eps}$-close to the disjoint union of cliques.
\end{corollary}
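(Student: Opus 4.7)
The plan is to deduce the corollary directly from Theorem~\ref{thm:stability} by applying a classical spectral upper bound on the MaxCut. The key identity is that for any $x\in\{-1,+1\}^n$, if we let $U=\{i:x_i=1\}$ and write $A$ for the adjacency matrix of $G$, then $x^T A x=2(m-2e(U,V\setminus U))$, so
\[
\mbox{mc}(G)=\frac{m}{2}-\frac{1}{4}\min_{x\in\{-1,+1\}^n}x^TAx.
\]
On the other hand, for any such $x$ we have $\|x\|_2^2=n$, so by the Rayleigh quotient lower bound $x^T A x\geq \lambda_n\|x\|_2^2 = n\lambda_n$. Combining these two observations immediately yields
\[
\surp(G)\leq \frac{n|\lambda_n|}{4}.
\]

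Now I would fix $\varepsilon>0$ to be the constant provided by Theorem~\ref{thm:stability}, and prove the corollary with exponent $\varepsilon'=\varepsilon/2$, say. Under the assumption $|\lambda_n|\leq n^{\varepsilon'}$, the spectral bound above gives $\mbox{mc}(G)\leq m/2+n^{1+\varepsilon'}/4$, which is less than $m/2+n^{1+\varepsilon}$ for $n$ large. Theorem~\ref{thm:stability} then asserts that $G$ is $n^{-\varepsilon}$-close, hence certainly $n^{-\varepsilon'}$-close, to a disjoint union of cliques. There is no real obstacle: the only thing one must be mildly careful about is the passage between the two exponents, which is just a matter of choosing $\varepsilon'$ slightly smaller than the $\varepsilon$ produced by Theorem~\ref{thm:stability} so as to absorb the harmless constant $1/4$.
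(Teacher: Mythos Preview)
Your proof is correct and follows essentially the same route as the paper: establish the spectral bound $\surp(G)\leq n|\lambda_n|/4$ (this is exactly Claim~\ref{claim:surplus_and_lambdan}) and feed it into Theorem~\ref{thm:stability}. The only difference is that the paper uses the same $\varepsilon$ throughout rather than halving it; since $n^{1+\varepsilon}/4<n^{1+\varepsilon}$ already, your passage to $\varepsilon'=\varepsilon/2$ is harmless but unnecessary.
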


The surplus and the smallest eigenvalue are closely related. It is well known (see e.g. \cite{Zhang} or Claim \ref{claim:surplus_and_lambdan}) that $\surp(G)\leq n|\lambda_n|$. Thus, Theorem~\ref{thm:stability} immediately applies to any graph with $|\lambda_n|\leq n^{\eps}$ since such graphs also have $\surp(G)\leq n^{1+\eps}$, thus showing the above corollary.

We also note that one can think of the surplus as a robust version of the smallest eigenvalue. Indeed, $\surp(G)\approx n|\lambda_n|$ holds in many natural graph families, but the surplus is much less sensitive to local changes than $\lambda_n$.

We prove Theorem \ref{thm:maxcut} in Sections \ref{sect:1} and Section \ref{sect:2}, and then we prove Theorem \ref{thm:stability} in Section \ref{sect:stability}. But before that, we introduce our notation and discuss some basic results in the next section.

\section{Preliminaries}

In this paper, we crucially exploit the relation between the MaxCut of a graph and its spectral properties. Therefore, we will start by recalling  some basic facts from linear algebra and graph theory, as well as some known relationships between spectra of graphs and their MaxCut. 

The \emph{edge density} of a graph $G$ is $m/\binom{n}{2}$, where $m=e(G)$ is the number of edges. Given a subset $U$ of the vertices, $G[U]$ denotes the subgraph of $G$ induced on vertex set $U$. Also, if $V\subset V(G)$ is disjoint form $U$, then $G[U,V]$ is the bipartite subgraph of $V(G)$ induced between $U$ and $V$. The \emph{complement} of $G$ is denoted by $\overline{G}$. The \emph{maximum degree} of $G$ is denoted by $\Delta(G)$, and the average degree by $d(G)$.

The \emph{MaxCut} of $G$, denoted by $\mbox{mc}(G)$, is the maximum size of a cut, where a \emph{cut} is a partition $(U,V)$ of the vertices into two parts, with all the edges having exactly one endpoint in both parts. The size of a cut is the number of its edges. The \emph{surplus} of $G$ is defined as $\surp(G)=\mbox{mc}(G)-m/2$, where $m$ is the number of edges of $G$. Note that $\surp(G)$ is always nonnegative. A useful property of the surplus is that if $G_0\subset G$, then $\surp(G)\geq \surp(G_0)$, see e.g. \cite{GJS}.

As we will see, the surplus of $G$ is controlled by the negative eigenvalues of the graph. If $G$ is an $n$-vertex graph whose adjacency matrix is $A$, then we denote by $\lambda_1\geq\dots\geq \lambda_n$ the eigenvalues of $A$, sometimes also calling them the eigenvalues of $G$. We also denote by $v_1, \dots, v_n$ the corresponding orthonormal basis of eigenvectors. By the Perron-Frobenius theorem, we may take $v_1$ to be a vector with non-negative entries, which we call the \emph{principal eigenvector} of $A$. Furthermore, the corresponding eigenvalue satisfies $\lambda_1\geq d(G)$. 

Finally, before presenting the relations among the eigenvalues and the surplus, we introduce a few final pieces of notation about matrices. Given two $n\times n$ matrices $A$ and $B$, their scalar product is defined as $\langle A,B\rangle=\trace(AB^T)=\sum_{1\leq i,j\leq n} A_{i,j}B_{i,j}.$
The \emph{Frobenius-norm} of an $n\times n$ matrix $A$ is
$$\|A\|_F^2=\langle A,A\rangle=\sum_{i,j=1}^{n}A_{i,j}^2.$$
If $A$ is symmetric with  eigenvalues  $\lambda_1,\dots,\lambda_n$, then we also have $\|A\|_F^2= \langle A,A\rangle = \trace(A^2)=\sum_{i=1}^n\lambda_i^2$.

The \emph{Hadamard product} (also known as entry-wise product) of $A$ and $B$ is the $n\times n$ matrix $A\circ B$ defined as $(A\circ B)_{i,j}=A_{i,j}B_{i,j}$. We denote the $k$-term Hadamard product $A\circ\dots\circ A$ by $A^{\circ k}$. A useful feature of the Hadamard product, which is a key component of our arguments, is that Hadamard product preserves positive semidefiniteness. More precisely, the \emph{Schur product theorem} states that if $A$ and $B$ are positive semidefinite symmetric matrices, then $A\circ B$ is also positive semidefinite. We remark that the Hadamard product played a crucial role in both \cite{RST} and \cite{Zhang}. In particar, Zhang \cite{Zhang} builds on the simple observation that if $A$ is an adjacency matrix, then $A=A\circ A$. In the current manuscript, one of the key steps is analyzing certain three-term Hadamard products.

We now begin by showing the promised relation between the MaxCut of a graph and its spectrum. We begin with an upper bound.

\begin{claim}\label{claim:surplus_and_lambdan}
For an $n$-vertex graph $G$ with the smallest eigenvalue $\lambda_n$, we have $\surp(G)\leq |\lambda_n|n/4$. 
\end{claim}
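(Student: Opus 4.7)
The plan is to use the variational characterization of $\lambda_n$ via the Rayleigh quotient, applied to the $\pm 1$ indicator vector of a maximum cut. The main calculation is standard, and the only thing to be careful about is the sign bookkeeping.

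First I would fix a cut $(U,V)$ achieving $\mathrm{mc}(G)$, and define $x\in\{-1,+1\}^n$ by $x_i=+1$ if $i\in U$ and $x_i=-1$ if $i\in V$. Then I would compute
\[
x^T A x \;=\; \sum_{i,j} A_{i,j}\, x_i x_j \;=\; 2\bigl(e(U)+e(V)\bigr) \,-\, 2\,e(U,V) \;=\; 2m \,-\, 4\,\mathrm{mc}(G) \;=\; -4\,\surp(G),
\]
using $e(U)+e(V)+e(U,V)=m$ and $\mathrm{mc}(G)=e(U,V)=m/2+\surp(G)$.

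Next, I would apply the Rayleigh quotient lower bound
\[
x^T A x \;\geq\; \lambda_n \,\|x\|_2^2 \;=\; \lambda_n\, n,
\]
valid because $x$ is a real vector and $A$ is a real symmetric matrix. Combining with the previous identity gives $-4\,\surp(G)\geq \lambda_n n$, i.e.\ $\surp(G)\leq -\lambda_n n/4$. Finally, I would note that $\lambda_n\leq 0$ whenever $G$ has at least one edge (indeed, any edge forces a $2\times 2$ all-ones principal submatrix whose smallest eigenvalue is $-1$, so $\lambda_n\leq -1$ by interlacing), while if $G$ is empty both sides equal $0$. Hence $-\lambda_n=|\lambda_n|$, and we conclude $\surp(G)\leq |\lambda_n|\,n/4$.

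There is essentially no obstacle here; the whole proof is a one-line Rayleigh quotient argument once the right test vector is chosen. The only subtlety worth mentioning is that the bound is tight for a clique on an odd number of vertices, which motivates why the factor $1/4$ cannot be improved in general.
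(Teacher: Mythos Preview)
Your proof is correct and follows essentially the same approach as the paper: both encode a cut by a $\pm1$ vector $x$, compute $x^TAx=-4\,\surp(G)$, and then bound $x^TAx\geq \lambda_n\|x\|_2^2=\lambda_n n$ via the Rayleigh quotient. The only cosmetic difference is that the paper phrases the intermediate step as $\surp(G)=\tfrac14\max_{x\in\{-1,1\}^n}(-x^TAx)$ before applying the eigenvalue bound, whereas you work directly with a single maximizing cut; the content is identical.
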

\begin{proof}
Let $A$ be the adjacency matrix of $G$. We can assign a vector with entries $\pm 1$ to each cut $V(G)=X\cup Y$, by setting $x_u=1$ if $u\in X$ and $x_u=-1$ otherwise. Then, the surplus of this cut equals $\frac{1}{2}\big(e(X, Y)-e(X)-e(Y)\big)=-\frac{1}{2}\sum_{\{u, v\}\subseteq V(G)} x_uA_{uv}x_v=-\frac{1}{4}\sum_{u, v\in V(G)} x_uA_{uv}x_v$. 
Hence, we have 
\[\surp(G)=\frac{1}{4}\max_{x\in \{-1,1\}^n}-x^T Ax=\frac{1}{4}\max_{x\in [-1,1]^n}-x^T Ax.\]
But, we have $-x^TAx\leq |\lambda_n|\|x\|_2^2$ for every vector $x\in \bR^n$, and so $\surp(G)\leq \frac{1}{4} |\lambda_n| \sqrt{n}^2=|\lambda_n|n/4$.
\end{proof}

The key ingredient of the above proof is the relation $\surp(G)=\frac{1}{4}\max_{x\in [-1,1]^n}-x^T Ax$, which can also be written as $\surp(G)=\frac{1}{4}\max_{x\in [-1, 1]^n}\langle -A, xx^T\rangle$, where we observe that $xx^T$ is a positive-semidefinite matrix with diagonal entries bounded by $1$. As we will see, it will be very convenient to define the semidefinite relaxation of the surplus as follows. Given an $n$-vertex graph $G$ with adjacency matrix $A$, define $$\surp^*(G)=\max_{X} -\langle A,X\rangle,$$ where the maximum is taken over all $n\times n$ positive semidefinite matrices $X$ such that $X_{i,i}\leq 1$ for every $i\in [n]$. The following inequality between $\surp(G)$ and $\surp^*(G)$ can be found in \cite{RT} and \cite{Zhang}, and it is a simple application of the graph Grothendieck inequality of Charikar and Wirth \cite{CW04}. 

\begin{claim}[\cite{RT}]
For every graph $G$, we have $\surp^*(G)\geq \surp(G)\geq \Omega\Big(\frac{\surp^*(G)}{\log n}\Big)$.
\end{claim}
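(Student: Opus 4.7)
The plan is to establish the two inequalities $\surp^*(G) \geq \surp(G)$ and $\surp(G) \geq \Omega(\surp^*(G)/\log n)$ separately. The key observation is that $\surp^*$ is, up to a factor of $4$, the natural semidefinite relaxation of the quadratic optimization problem that defines $\surp$.

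For the first inequality, I would start from the identity $\surp(G)=\frac{1}{4}\max_{x\in\{-1,1\}^n}\langle -A,xx^T\rangle$ derived in the proof of Claim~\ref{claim:surplus_and_lambdan}. For any $x\in\{-1,1\}^n$, the rank-one matrix $X=xx^T$ is positive semidefinite and has diagonal entries $X_{ii}=x_i^2=1$, so $X$ is feasible for the SDP defining $\surp^*(G)$. Taking the maximum over such $x$ therefore gives
$$\surp^*(G)\;\geq\; \max_{x\in\{-1,1\}^n}\langle -A,xx^T\rangle \;=\; 4\,\surp(G)\;\geq\; \surp(G).$$

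For the reverse direction, I would invoke the graph Grothendieck inequality of Charikar and Wirth \cite{CW04} as a blackbox. Their theorem asserts that for every symmetric $n\times n$ matrix $B$ with zero diagonal,
$$\max_{X\succeq 0,\; X_{ii}\leq 1}\langle B,X\rangle\;\leq\; O(\log n)\cdot\max_{x\in\{-1,1\}^n}\langle B,xx^T\rangle.$$
Applying this inequality to $B=-A$, whose diagonal vanishes since $A$ is the adjacency matrix of a simple graph, the left hand side is exactly $\surp^*(G)$ and the right hand side equals $O(\log n)\cdot 4\,\surp(G)$. Rearranging yields $\surp(G)\geq\Omega(\surp^*(G)/\log n)$, completing the proof.

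The main substantive ingredient is the Charikar--Wirth inequality itself, which is a deep semidefinite-programming result; however, since it is being used as a blackbox, the only real content of the proof is verifying that the feasible region of $\surp^*(G)$ matches the hypotheses of their theorem, i.e.\ checking the zero-diagonal condition and the correct diagonal normalization of $X$. (One can additionally note that the constraint $X_{ii}\leq 1$ can be replaced by $X_{ii}=1$ without affecting either value, since $A$ has vanishing diagonal and one may always add a nonnegative multiple of $e_ie_i^T$ to $X$ while preserving positive semidefiniteness and the objective.)
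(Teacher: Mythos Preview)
Your proof is correct and follows exactly the approach the paper indicates: the paper does not give a detailed argument for this claim but simply cites \cite{RT,Zhang} and notes that it is ``a simple application of the graph Grothendieck inequality of Charikar and Wirth \cite{CW04}.'' Your write-up supplies precisely those details, and in fact extracts the slightly sharper $\surp^*(G)\geq 4\,\surp(G)$ along the way.
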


The semidefinite relaxation $\surp^*(G)$ allows us to obtain lower-bounds on the surplus using the negative eigenvalues of a graph $G$. In this paper, we will be often concerned with very dense graphs, and the following lemma gives good lower bounds on the surplus of $G$ in this case. Parts of the following lemma and similar bounds can be also found in \cite{RST,RT,Zhang}.

\begin{lemma}\label{lemma:surp_star}
Let $G$ be a graph on $n$ vertices with eigenvalues $\lambda_1\geq \dots\geq \lambda_n$, and let $\Delta$ denote the maximum degree of the complement of $G$. Then
\begin{itemize}
    \item[(i)] $\surp^*(G)\geq \sum_{\lambda_i<0}|\lambda_i|$
    \item[(ii)] $\surp^*(G)\geq \Omega\left(\frac{1}{\sqrt{\Delta+1}}\sum_{\lambda_i<0}\lambda_i^2\right)$
    \item[(iii)]  $\surp^*(G)\geq \Omega\left(\frac{1}{\Delta+1}\sum_{\lambda_i<0}|\lambda_i|^3\right)$
\end{itemize}
\end{lemma}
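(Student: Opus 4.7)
For each of parts (i)–(iii) I would exhibit an explicit PSD matrix $X$ with $X_{jj}\le 1$ realising the lower bound via the variational formula for $\surp^*(G)$. Part (i) is immediate: take $X=P_-:=\sum_{\lambda_i<0}v_iv_i^T$, the orthogonal projection onto the negative eigenspace. Orthonormality of $(v_i)$ forces $X_{jj}=\sum_{\lambda_i<0}v_i(j)^2\le 1$, and $-\langle A,P_-\rangle=\sum_{\lambda_i<0}|\lambda_i|$.

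For parts (ii) and (iii), let $M=\sum_{\lambda_i<0}|\lambda_i|v_iv_i^T$ be the PSD matrix encoding the negative spectrum of $A$, so that $A=A_+-M$ with $A_+=\sum_{\lambda_i>0}\lambda_iv_iv_i^T$ also PSD. The two summands have orthogonal ranges, so $A_+M=0$ and $AM=-M^2$. This yields the scalar identities $-\langle A,M\rangle=\trace(M^2)=\sum_{\lambda_i<0}\lambda_i^2$ and $-\langle A,M^2\rangle=\trace(M^3)=\sum_{\lambda_i<0}|\lambda_i|^3$, together with the diagonal identity
\[
(M^2)_{jj}=-(AM)_{jj}=-\sum_{k\sim j}M_{jk}.
\]

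The key step will be the diagonal estimate $\max_j(M^2)_{jj}\le O(\Delta+1)$. For a $d$-regular $G$ this is transparent: $(M^2)_{jj}=d_j-(A_+^2)_{jj}\le d-\lambda_1^2v_1(j)^2=d(n-d)/n\le\bar d_j+1$, using $v_1=\mathbf{1}/\sqrt n$ and $\lambda_1=d$. In general, rewriting the diagonal identity via $A=J-I-\overline A$ gives $(M^2)_{jj}=M_{jj}+(\overline AM)_{jj}-(M\mathbf{1})_j$; Cauchy–Schwarz bounds $|(\overline AM)_{jj}|\le\sqrt{\bar d_j(M^2)_{jj}}$ and $M_{jj}\le\sqrt{(M^2)_{jj}}$ (by PSD-ness of $M$), while the extra term $(M\mathbf{1})_j$ is controlled using $Mv_1=0$ together with the fact that sparsity of $\overline G$ forces $v_1$ to be close to $\mathbf{1}/\sqrt n$. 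Combining these ingredients yields a quadratic inequality in $\sqrt{(M^2)_{jj}}$ which solves to the claimed estimate.

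With this estimate in hand, part (iii) follows directly by taking $X=M^2/\max_j(M^2)_{jj}$, which is PSD with $X_{jj}\le 1$ and gives $-\langle A,X\rangle\ge\Omega\bigl(\sum_{\lambda_i<0}|\lambda_i|^3/(\Delta+1)\bigr)$. For part (ii), PSD-ness of $M$ gives $M_{jj}^2\le(M^2)_{jj}$, so the diagonal estimate forces $\max_j M_{jj}\le O(\sqrt{\Delta+1})$; taking $X=M/\max_j M_{jj}$ then gives $-\langle A,X\rangle\ge\Omega\bigl(\sum_{\lambda_i<0}\lambda_i^2/\sqrt{\Delta+1}\bigr)$. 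I expect the main obstacle to be verifying the diagonal estimate in the non-regular regime, where $Mv_1=0$ with a non-uniform Perron eigenvector $v_1$ produces the correction $(M\mathbf{1})_j$ requiring some care to control.
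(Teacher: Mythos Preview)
Your proposal is correct and close in spirit to the paper's proof, but differs in two places worth noting. For part (iii), both you and the paper use the same test matrix $X=\beta M^2$ (where $M=\sum_{\lambda_i<0}|\lambda_i|v_iv_i^T$), and both reduce to the diagonal estimate $\max_j(M^2)_{jj}=O(\Delta+1)$. The paper obtains this by observing that $B^2-M^2$ is PSD for $B=A-\lambda_1v_1v_1^T$, hence $(M^2)_{jj}\le(B^2)_{jj}$, and then bounds each entry of $B$ directly via Lemma~\ref{lemma:max_entry}: off the $\overline G$-support one has $|B_{jk}|\le O((\Delta+1)/n)$, and on it $|B_{jk}|\le O(1)$, giving $(B^2)_{jj}\le O(\Delta+1)$. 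Your route via the identity $(M^2)_{jj}=M_{jj}+(\overline AM)_{jj}-(M\mathbf 1)_j$ and a self-referential quadratic inequality in $t=\sqrt{(M^2)_{jj}}$ also works: with $|(M\mathbf 1)_j|=|(M(\mathbf 1-\sqrt n\,v_1))_j|\le t\|\mathbf 1-\sqrt n\,v_1\|$ and $\|\mathbf 1-\sqrt n\,v_1\|=O(\Delta/\sqrt n)\le O(\sqrt\Delta)$ (again from Lemma~\ref{lemma:max_entry}), you get $t\le 1+\sqrt{\bar d_j}+O(\sqrt\Delta)$ as desired. Both arguments ultimately hinge on the same Perron-vector estimate; the paper's entrywise computation on $B$ is perhaps more transparent, while your approach makes the role of the complement more explicit. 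For part (ii), the paper takes a different and rather slick shortcut: instead of introducing a new test matrix, it multiplies the bounds from (i) and (iii) and applies Cauchy--Schwarz to $(|\lambda_i|^{3/2})\cdot(|\lambda_i|^{1/2})$ to get $(\sum|\lambda_i|)(\sum|\lambda_i|^3)\ge(\sum\lambda_i^2)^2$, yielding (ii) with no further work. Your direct route via $X=M/\max_jM_{jj}$ together with $M_{jj}\le\sqrt{(M^2)_{jj}}$ is equally valid but requires the full diagonal estimate from (iii) as input anyway, so the paper's derivation is the more economical of the two.
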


Before we prove Lemma~\ref{lemma:surp_star}, we briefly discuss a preliminary lemma. We show that in dense graphs, the principal eigenvector must be close to the normalized all-ones vector.

\begin{lemma}\label{lemma:max_entry}
Let $G$ be an $n$-vertex graph, whose complement has edge density $p\leq 1/10$ and maximum degree $\Delta=\Delta(\overline{G})$. If $v_1$ is the principal eigenvector of $G$, then for each $i\in [n]$ we have \[\frac{1-2\Delta/n}{\sqrt{n}} \leq v_1(i)\leq \frac{1+2p+2/n}{\sqrt{n}}.\]
\end{lemma}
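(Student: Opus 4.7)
The plan is to use the identity $A + I + \overline{A} = J$ (with $J$ the all-ones matrix), which rewrites the principal eigenvector equation $Av_1 = \lambda_1 v_1$ as $(\lambda_1 + 1) v_1 = s \mathbf{1} - \overline{A} v_1$, where $s := \sum_j v_1(j)$. Reading this coordinatewise gives $(\lambda_1 + 1) v_1(i) = s - (\overline{A} v_1)_i$. Since $v_1$ has nonnegative entries by Perron-Frobenius and every row of $\overline{A}$ sums to at most $\Delta$, the quantity $(\overline{A} v_1)_i$ lies between $0$ and $\Delta \|v_1\|_\infty$, giving the pointwise sandwich
\[\frac{s - \Delta \|v_1\|_\infty}{\lambda_1 + 1} \leq v_1(i) \leq \frac{s}{\lambda_1 + 1}. \qquad (\star)\]
Maximizing over $i$ in the right-hand inequality yields $\|v_1\|_\infty \leq s/(\lambda_1 + 1)$, which I will substitute back into the left-hand inequality.

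The remainder of the argument is four elementary estimates on $s$ and $\lambda_1$: (i) Cauchy--Schwarz gives $s \leq \sqrt{n}$; (ii) the Rayleigh quotient at $\mathbf{1}$ gives $\lambda_1 \geq 2e(G)/n = (1-p)(n-1)$, so $\lambda_1 + 1 \geq (1-p)n + p$; (iii) the Rayleigh identity $\lambda_1 + 1 = v_1^T (J - \overline{A}) v_1 = s^2 - v_1^T \overline{A} v_1 \leq s^2$ gives $s \geq \sqrt{\lambda_1 + 1}$; and (iv) summing $(\star)$ over all $i$ produces $(\lambda_1 + 1) s = ns - \sum_j \deg_{\overline{G}}(j) v_1(j) \geq (n - \Delta) s$, so $\lambda_1 + 1 \geq n - \Delta$.

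Combining the estimates, the upper bound is immediate: $v_1(i) \leq s/(\lambda_1 + 1) \leq \sqrt{n}/((1-p)n + p)$, and the elementary inequality $1/(1-p) \leq 1 + 2p$ (valid for $p \leq 1/2$) then yields $v_1(i) \leq (1+2p)/\sqrt{n} \leq (1+2p+2/n)/\sqrt{n}$. For the lower bound, substituting $\|v_1\|_\infty \leq s/(\lambda_1+1)$ into $(\star)$ gives
\[v_1(i) \geq \frac{s}{\lambda_1+1}\left(1 - \frac{\Delta}{\lambda_1+1}\right).\]
By (iii) together with $\lambda_1 \leq n-1$, we have $s/(\lambda_1+1) \geq 1/\sqrt{\lambda_1+1} \geq 1/\sqrt{n}$; by (iv), $\Delta/(\lambda_1+1) \leq \Delta/(n-\Delta) \leq 2\Delta/n$ when $\Delta \leq n/2$. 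The complementary case $\Delta > n/2$ is vacuous, since then the target lower bound is non-positive while $v_1(i) \geq 0$. This produces $v_1(i) \geq (1-2\Delta/n)/\sqrt{n}$.

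The main obstacle is the lower bound: one must feed the $\|v_1\|_\infty$-estimate back into $(\star)$ without degrading the $(1 - 2\Delta/n)$ factor. The crucial move is the Rayleigh identity in (iii); the naive bound $s \leq \sqrt{n}$ alone cannot control $s/(\lambda_1+1)$ from below, so the matching lower bound $s \geq \sqrt{\lambda_1 + 1}$ is what produces the correct $1/\sqrt{n}$ prefactor in the end.
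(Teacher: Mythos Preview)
Your proof is correct and follows essentially the same approach as the paper's: both read the eigenvector equation coordinatewise (you via $A+I+\overline{A}=J$, the paper directly via $\lambda_1 v_1(b)=\sum_{i\sim b}v_1(i)$), bound $s=\sum_i v_1(i)$ above by $\sqrt{n}$ and below by roughly $\sqrt{\lambda_1}$, and control the defect by $\Delta\|v_1\|_\infty$. The only cosmetic differences are your use of the Rayleigh identity $\lambda_1+1\le s^2$ (the paper instead uses $1\le \|v_1\|_\infty s$) and of $\lambda_1+1\ge n-\Delta$ (the paper uses $\lambda_1^2\ge d(G)^2\ge n^2/2$); both variants lead to the same bounds.
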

\begin{proof}
Let $\lambda_1$ be the eigenvalue corresponding to $v_1$. Then for every $b\in [n]$, we have $\lambda_1v_1(b)=\sum_{b\sim i}v_1(i)$. By the inequality between the arithmetic and quadratic mean, we have $\frac{1}{n}\sum_{b\sim i}v_1(i)\leq \frac{1}{n}\sum_{i=1}^n v_1(i)\leq \sqrt{\frac{\sum_i v_1(i)^2}{n}}=1/\sqrt{n}$, where we have used that $\sum_{i=1}^nv_1(i)^2=1$. Hence $\lambda_1v_1(b)\leq \sqrt{n}$ and so
$$v_1(b)\leq \frac{\sqrt{n}}{\lambda_1}\leq \frac{\sqrt{n}}{d}=\frac{\sqrt{n}}{(1-p)(n-1)}\leq \frac{1+2p+2/n}{\sqrt{n}}.$$
Here, in the second inequality, we used that $\lambda_1\geq d(G)$, and in the last inequality that $p<1/10$. To prove the lower bound, we first observe that 
$$1=\sum_{i=1}^n v_1(i)^2\leq \|v_1\|_{\infty} \sum_{i=1}^n v_1(i),$$
which implies that $\sum_{i=1}^nv_1(i)\geq \frac{\lambda_1}{\sqrt{n}}$. But then using the identity $Av_1=\lambda_1 v_1$,
$$\lambda_1v_1(b)=\sum_{i\sim b}v_1(i)\geq \sum_{i=1}^nv_1(i)-\Delta \|v_1\|_{\infty}\geq\frac{\lambda_1}{\sqrt{n}}-\Delta\frac{\sqrt{n}}{\lambda_1}\geq \frac{\lambda_1}{\sqrt{n}}\Big(1-\Delta\frac{n}{\lambda_1^2}\Big)\geq \frac{\lambda_1}{\sqrt{n}}\Big(1-\frac{2\Delta}{n}\Big),$$
where we have used that $\lambda_1^2\geq d(G)^2\geq n^2/2$ in the last inequality. Canceling $\lambda_1$ gives $v_1(b)\geq \frac{1-2\Delta/n}{\sqrt{n}}$.
\end{proof}

\begin{proof}[Proof of Lemma~\ref{lemma:surp_star}.]
We begin by showing the inequalities \textit{(i)} and \textit{(iii)}, which we then combine to derive \textit{(ii)}. Let $v_1,\dots,v_n$ be an orthonormal basis of eigenvectors corresponding to the eigenvalues $\lambda_1,\dots,\lambda_n$. The inequalities \textit{(i)} and \textit{(iii)} will be shown by plugging in the appropriate test matrix $X$ in the formula $\surp^*(G)= \max_X -\langle A, X\rangle$. Observe that, if we choose $X=\sum_{i=1}^n\alpha_i v_iv_i^T$ for some real numbers $\alpha_1,\dots,\alpha_n$, then
$$\langle A,X\rangle=\sum_{i=1}^n\sum_{j=1}^n\alpha_i\lambda_j \langle v_iv_i^T,v_jv_j^T\rangle=\sum_{i=1}^n\sum_{j=1}^n\alpha_i\lambda_j \langle v_i,v_j\rangle^2=\sum_{i=1}^n\alpha_i\lambda_i.$$

\textit{(i)} Let $X=\sum_{\lambda_i<0} v_iv_i^T$. Then $X$ is positive semidefinite, and as $v_1,\dots,v_n$ is an orthonormal basis, we have $$X_{j,j}=\sum_{\lambda_i<0}v_i(j)^2\leq\sum_{i=1}^n v_i(j)^2=\sum_{i=1}^n \langle v_i, \mathbf{e}_j\rangle^2= \|\mathbf{e}_j\|^2=1.$$ Therefore, \[\surp^*(G)\geq -\langle A,X\rangle=\sum_{\lambda_i<0}|\lambda_{i}|.\]

\textit{(iii)} Let $\beta=\frac{1}{100(\Delta+1)}$, and $X=\beta\sum_{\lambda_i<0} \lambda_i^2v_iv_i^T$. Then $X$ is positive semidefinite. It is enough to prove that the diagonal entries of $X$ are bounded by $1$, as then $\surp^*(G)\geq -\langle A,X\rangle =\beta\sum_{\lambda_i<0}|\lambda_{i}|^3$. 

To show that  $X_{i,i}\leq 1$, we consider the matrix $B=A-\lambda_1v_1v_1^T$. Since $$\beta B^2-X=\beta\sum_{i\neq 0,\lambda_i>0}\lambda_i^2v_iv_i^T,$$ we have that $\beta B^2-X$ is positive semidefinite. Hence, $(\beta B^2)_{i,i}\geq X_{i,i}$ for every $i\in [n]$. Therefore, it is enough to show that $(B^2)_{i,i}\leq 1/\beta=100(\Delta+1)$. 

To show this, we first bound the entries of $B$. We denote by $p$ the density of $\overline{G}$, and observe that $p\leq \frac{\Delta n/2}{\binom{n}{2}}\leq \frac{\Delta}{n-1}$. Then, Lemma \ref{lemma:max_entry} implies that for any $i, j\in [n]$ we have 
\[ 1-\frac{5\Delta}{n}\leq (n-1)(1-p)\left(\frac{1-2\Delta/n}{\sqrt{n}}\right)^2\leq \lambda_1 v_1(i)v_1(j)\leq n\left(\frac{1+2p+2/n}{\sqrt{n}}\right)^2\leq 1+\frac{5(\Delta+1)}{n}.\]
Therefore, for every $i, j\in [n]$, if $ij\in E(G)$ and $A_{i, j}=1$, then $|B_{i, j}|\leq \frac{5(\Delta+1)}{n}$. Otherwise, we have $|B_{i,j}|\leq 1+5(\Delta+1)/n\leq 6$. From this, we have 
$$(B^2)_{i,i}=\sum_{j=1}^n (B_{i,j})^2\leq 36\Delta+n\frac{25(\Delta+1)^2}{n^2}\leq 100(\Delta+1).$$
\textit{(ii)} We show that \textit{(i)} and \textit{(iii)} can be combined to give the desired lower bound on $\surp^*(G)$. Namely, we have
\[\surp^*(G)^2\geq \beta\left(\sum_{\lambda_i<0}|\lambda_i|^3\right)\left(\sum_{\lambda_i<0}|\lambda_i|\right)\geq \beta\left(\sum_{\lambda_i<0}\lambda_i^2\right)^2.\]
Note that the first inequality is the combination of \textit{(i)} and \textit{(iii)}, while the second one is simply the Cauchy-Schwartz inequality applied to the sequences $(|\lambda_i|^3)_{\lambda_i<0}$ and $(|\lambda_i|)_{\lambda_i<0}$. Taking square roots then proves \textit{(ii)}.
\end{proof}

Finally, we remark that $\surp^*(\cdot)$ is also monotone, that is, if $G_0$ is an induced subgraph of $G$, then $\surp^*(G_0)\leq \surp^*(G)$. This fact follows easily from the definition, and it is used repeatedly throughout our paper.

\section{Dense graphs with small surplus}\label{sect:1}

The proof of Theorem \ref{thm:maxcut} can be summarized in the following simple steps, in each of which we find increasingly denser subgraphs of our host graph. Interestingly, the underlying methods dealing with each of these steps are quite different. Let $G$ be a graph on $n$ vertices with $m$ edges and assume that $\surp(G)\leq m^{1/2+\eps}$. Without loss of generality, $G$ has no isolated vertices.

First, by an old result of Erd\H{o}s, Gy\'arf\'as, and Kohayakawa \cite{EGyK}, $\surp(G)=\Omega(n)$. Therefore, we may assume that $m>n^{2-\rho}$ for some $0<\rho=\rho(\eps)$. Second, by a result of Zhang \cite{Zhang}, such a dense graph $G$ contains a subgraph $G_1$ on at least $n^{2-\rho}$ vertices of edge density $1-10^{-5}$. Interestingly, in \cite{Zhang}, this step consists of two sub-steps: getting a subgraph $G_{0.5}$ of density $\Omega(1)$, and then getting a subgraph $G_1$ of $G_{0.5}$ using stability. Third, we prove that $G_1$ contains an induced subgraph $G_2$ of edge density at least $1-n^{-0.1}$ on $v(G_1)^{1-o(1)}$ vertices. This result is presented as Lemma \ref{lemma:main}. Finally, we show that $G_2$ contains a half-sized induced subgraph $G_3$ of edge density at least $1-n^{O(\eps)-1}$. This result is presented as Lemma \ref{lemma:very_dense}.

As outlined above, one of the key components of our proof is the following remarkable result of Zhang \cite{Zhang}, which we use as a black box.

\begin{theorem}[Theorem 1.7 in \cite{Zhang}]\label{thm:zhang}
Let $\rho<\frac{1}{1000}$. For every $p>0$, the following holds for every sufficiently large $n$ with respect to $p$. Let $G$ be an $n$-vertex graph with at least $n^{2-\rho}$ edges. If $\surp(G)\leq n^{1+\rho}$, then $G$ contains an induced subgraph on at least $n^{1-4\rho}$ vertices of edge density at least $(1-p)$.
\end{theorem}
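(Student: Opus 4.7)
The plan is to combine the spectral machinery of Lemma~\ref{lemma:surp_star} with the Hadamard-product identity $A=A\circ A$ that is central to Zhang's framework, followed by a density-increment step. First I would translate the surplus hypothesis into spectral data. Since $\surp(G)\leq n^{1+\rho}$, the relation $\surp^*(G)\leq O(\surp(G)\log n)$ together with Lemma~\ref{lemma:surp_star}(i) gives $\sum_{\lambda_i<0}|\lambda_i|\leq n^{1+\rho+o(1)}$. If additionally the maximum degree $\Delta$ of the complement is small (as will be the case once we restrict to a dense subgraph), parts (ii) and (iii) yield much stronger $\ell^2$ and $\ell^3$ control on the negative spectrum, with gains of $\Delta^{-1/2}$ and $\Delta^{-1}$ respectively.

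The second ingredient is to decompose $A=\lambda_1 v_1v_1^T+B$ and exploit the near-uniformity of $v_1$ in dense graphs. Lemma~\ref{lemma:max_entry} shows that on any induced subgraph of edge density close to $1$, the rank-one part $\lambda_1 v_1v_1^T$ agrees entrywise with the constant matrix of value roughly equal to the density, up to $O(\Delta/n)$ error. The remainder $B$ is a symmetric matrix sharing the negative spectrum of $A$, so $\|B\|_F^2=\sum_{i\geq 2}\lambda_i^2$ is controlled by the bounds of the previous step, and the intuition is that $A$ should genuinely resemble a rank-one dense pattern plus a small perturbation.

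Now comes the amplification, where the argument does real work. Using $A=A\circ A$ and expanding
\[A=A\circ A=\lambda_1^2(v_1\circ v_1)(v_1\circ v_1)^T+2\lambda_1(v_1v_1^T)\circ B+B\circ B,\]
the two outer terms are positive semidefinite by Schur's theorem. Rearranging turns bounds on $\sum\lambda_i^2$ into bounds on $\sum|\lambda_i|$ for the new matrix, effectively improving the exponent in the eigenvalue sum. Iterating this with higher Hadamard powers $A=A^{\circ k}$ (Zhang's main technique) amplifies the gap: each iterate turns $\ell^k$ control into $\ell^{k+1}$ control, compounding to show that $B$ is supported essentially on a small set of exceptional rows and columns corresponding to vertices of high degree in $\overline{G}$. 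In other words, most vertices have small $\overline{G}$-degree.

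Finally, a pigeonhole/density-increment step produces the induced subgraph: after identifying and removing the exceptional vertices, one bounds the number of deletions needed to push the complement density below $p$. The total number of vertices lost must be kept sub-polynomial so that $n^{1-4\rho}$ remain; the factor $4\rho$ is determined by the compounding losses from the Hadamard iterations and from successive spectral decompositions. The hardest step is clearly the Hadamard amplification: Schur's theorem provides positivity of $B\circ B$ but gives no direct control over its spectrum in terms of $B$'s spectrum, so one must carefully factor $B$ through its sparse/low-rank directions and track how the errors accumulate across iterates. Striking the right balance between how much density is gained per iteration and how many vertices must be discarded is what dictates the relationship between $\rho$, the target density $1-p$, and the final subgraph size $n^{1-4\rho}$.
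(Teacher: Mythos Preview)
This theorem is quoted from \cite{Zhang} and used in the present paper as a black box; there is no proof here to compare against. The paper does remark, however, that Zhang's argument has two distinct sub-steps: first produce an induced subgraph $G_{0.5}$ of \emph{constant} density $\Omega(1)$, and only then boost to density $1-p$ via a stability argument.

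Your sketch has a genuine gap: it never addresses the first sub-step. The hypothesis gives only $e(G)\geq n^{2-\rho}$, so the density of $G$ is about $n^{-\rho}$ and the complement may have maximum degree close to $n$. Every concrete tool you invoke---Lemma~\ref{lemma:surp_star}(ii)--(iii), Lemma~\ref{lemma:max_entry}, and the approximation $\lambda_1 v_1v_1^T\approx$ constant matrix---requires the complement to already have small maximum degree. You write ``as will be the case once we restrict to a dense subgraph,'' but producing that first dense subgraph is precisely the hard content of the theorem; nothing in your outline gets you from density $n^{-\rho}$ to density $\Omega(1)$. There is also a technical error in the Hadamard step: you claim $B\circ B$ is positive semidefinite by Schur, but $B=A-\lambda_1 v_1v_1^T$ has eigenvalues $0,\lambda_2,\dots,\lambda_n$ and is indefinite, so Schur's theorem does not apply to it. The identity $A=A\circ A$ must be combined with a decomposition of $A$ into genuinely PSD pieces (for instance its positive and negative spectral parts, as in the $(B+E)$ of Lemma~\ref{lemma:density_increment}), not with the rank-one peel you describe.
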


Hence, in order to prove Theorem \ref{thm:maxcut}, we may pass to a very dense subgraph. Unfortunately, the dependence of $n$ on $p$ for which Theorem \ref{thm:zhang} starts to be effective is exponential. However, we can boost this result even further. The main result of this section shows that if $G$ has density at least $1-10^{-5}$, then either $G$ has large surplus, or a large induced subgraph of density at least $1-n^{-0.1}$. Then, in the next subsection, we boost this even further to show that graphs of density $1-n^{-0.1}$ either have large surplus, or they contain a large subgraph of density at least $1-n^{-0.99}$.

The main result of this section is the following lemma.

\begin{lemma}\label{lemma:main}
Let $0<\eps<1/4$ and $0<\alpha<\min\{\frac{1}{12}-\frac{\eps}{6},\frac{1}{6}-\frac{2\eps}{3}\}$, and let $n$ be sufficiently large with respect to $\eps,\alpha$. Let $G$ be an $n$-vertex graph with edge density at least $1-10^{-5}$. If $\surp^*(G)<n^{1+\eps}$, then $G$ contains an induced subgraph of size $n^{1-o(1)}$ with edge density at least $1-n^{-\alpha}$.
\end{lemma}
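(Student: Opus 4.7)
The plan is to reduce the problem to controlling the maximum degree of the complement $\overline{G}$ on a large induced subgraph. Concretely, the edge density of $G[U]$ is at least $1-\Delta(\overline{G}[U])/(|U|-1)$, so it suffices to find $U$ with $|U|\geq n^{1-o(1)}$ and $\Delta(\overline{G}[U])\leq n^{1-\alpha}$. The idea is to prune the vertices of large $\overline{G}$-degree using the spectral consequences of the hypothesis $\surp^*(G)\leq n^{1+\eps}$.

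First, I translate the surplus bound into spectral information on $\overline{G}$. Since $G$ has edge density at least $1-10^{-5}$, Lemma~\ref{lemma:max_entry} implies that its principal eigenvector $v_1$ is within a $(1+o(1))$ factor of $\mathbf{1}/\sqrt{n}$ coordinate-wise. Consequently, up to a rank-one correction, the negative eigenvalues of $G$ are in bijection with the positive eigenvalues of $\overline{G}$ shifted by $-1$. Combined with Lemma~\ref{lemma:surp_star} applied to any induced subgraph $G[W]$ (using monotonicity of $\surp^*$), this yields the two spectral inequalities
\[
\sum_{i\geq 2}\bigl(1+\lambda_i(\overline{G}[W])\bigr)_+^{2}\ \lesssim\ n^{1+\eps}\sqrt{\Delta(\overline{G}[W])+1},\qquad \sum_{i\geq 2}\bigl(1+\lambda_i(\overline{G}[W])\bigr)_+^{3}\ \lesssim\ n^{1+\eps}\bigl(\Delta(\overline{G}[W])+1\bigr),
\]
where $(x)_+=\max(x,0)$.

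Second, I show that a set $S$ of vertices of large $\overline{G}$-degree must itself be small. The key heuristic is that for each hub $v\in S$ of $\overline{G}$-degree at least $D$, the vector $\mathbf{e}_v+D^{-1/2}\chi_{N_{\overline{G}}(v)}$ has Rayleigh quotient of order $\sqrt{D}$ with respect to the adjacency matrix of $\overline{G}$. An orthogonalisation of these test vectors over $v\in S$ is meant to produce roughly $|S|$ approximately independent Rayleigh-quotient witnesses, from which $|S|\cdot D\lesssim \sum_{\lambda_i(\overline{G})>0}\lambda_i(\overline{G})^2$ and similarly for the $\ell^3$ version. Comparison with step one then gives an upper bound on $|S|$ in terms of $D$, $\Delta(\overline{G})$, $n$ and $\eps$.

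Third, an iterative pruning scheme shrinks the complement's maximum degree over a bounded number of rounds. Starting from $W_0=V(G)$ and the trivial bound $\Delta_0\leq n-1$, at each round $t$ I fix a threshold $D_{t+1}$, use the hub-bound of step two on $G[W_t]$ (via the monotonicity $\surp^*(G[W_t])\leq\surp^*(G)$) to estimate the number of vertices of $\overline{G}[W_t]$-degree exceeding $D_{t+1}$, and remove them to form $W_{t+1}$. After a constant number of rounds we arrive at $|W_T|\geq n^{1-o(1)}$ with $\Delta(\overline{G}[W_T])\leq n^{1-\alpha}$, which gives the desired density. The specific constraints $\alpha<1/12-\eps/6$ and $\alpha<1/6-2\eps/3$ arise from balancing the per-round loss in $|W_t|$ against the per-round reduction in $\Delta_t$ under the two spectral inequalities. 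The main obstacle is the orthogonalisation in step two: hub neighbourhoods in $\overline{G}$ can overlap heavily, and extracting $|S|$ genuinely independent large positive eigenvalues of $\overline{G}$ is delicate. I expect the cleanest route to be via Schur's product theorem together with a three-term Hadamard product construction, as foreshadowed in the discussion of Hadamard products in the preliminaries.
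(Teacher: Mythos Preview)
Your step two contains a genuine gap. The hub test-vector heuristic cannot force $\sum_{i\ge 2}(1+\mu_i)_+^2$ (or any spectral quantity controlled by $\surp^*(G)$ via Lemma~\ref{lemma:surp_star} and Weyl) to be large merely because $\overline{G}$ has many high-degree vertices: all of the ``hub energy'' may sit in the single top eigenvalue $\mu_1(\overline{G})$, which is \emph{not} bounded by $\surp^*(G)$. Concretely, take $\overline{G}=K_{s,n-s}$ with $s=10^{-6}n$. Then $G=K_s\sqcup K_{n-s}$ has edge density exceeding $1-10^{-5}$ and $\surp^*(G)=\Theta(n)<n^{1+\eps}$; there are $s=\Theta(n)$ hubs of $\overline{G}$-degree $n-s$; yet the eigenvalues of $\overline{G}$ are $\pm\sqrt{s(n-s)}$ and $0$ with multiplicity $n-2$, so $\sum_{i\ge 2}(1+\mu_i)_+^2=n-2$ while $|S|\cdot D\asymp n^2$. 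Any orthogonalisation of your test vectors collapses here because all hub neighbourhoods coincide, and the Rayleigh witnesses live in a space on which the quadratic form has essentially rank one. Thus neither the $\ell^2$ nor the $\ell^3$ comparison in your outline yields a usable bound on $|S|$.

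The paper bypasses hub-counting entirely and instead runs a \emph{density increment}: if $G$ has density $1-p$ with $n^{-\alpha}\le p<10^{-5}$ and $\surp^*(G)<n^{1+\eps}$, then some $n/4$-vertex induced subgraph has density at least $1-10^8p^3$. The cubic gain comes from the three-term Hadamard product you anticipated, but deployed directly rather than through eigenvalue extraction. With $B=A-\lambda_1v_1v_1^T$ and $E=\sum_{\lambda_i<0}|\lambda_i|v_iv_i^T$, Schur gives $(B+E)^{\circ 3}\succeq 0$, hence $\mathds{1}_I^T(B+E)^{\circ 3}\mathds{1}_I\ge 0$ on the set $I$ (of size $\ge n/4$) where $E_{ii}\le 4n^\eps$ and $v_1(i)\ge(1-8p)/\sqrt{n}$. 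On $I$ one has $B_{ij}\approx -\mathds{1}[ij\notin E(G)]$ up to $O(p)$, so $\mathds{1}_I^TB^{\circ 3}\mathds{1}_I\le 10^6|I|^2p^3-e(\overline{G}[I])/4$; the cross terms involving $E$ are bounded by $O(n^{7/4+\eps/2}+n^{3/2+2\eps})$ using $\trace(E)\le n^{1+\eps}$ and $\|E\|_F^2=O(n^{3/2+\eps})$. The constraints $\alpha<\tfrac{1}{12}-\tfrac{\eps}{6}$ and $\alpha<\tfrac{1}{6}-\tfrac{2\eps}{3}$ are exactly what make these two error terms $o(|I|^2p^3)$ when $p\ge n^{-\alpha}$. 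Iterating $p\mapsto 10^8p^3$ for $O(\log\log n)$ rounds drives $p$ below $n^{-\alpha}$ while losing only a factor $4^{O(\log\log n)}=n^{o(1)}$ in the vertex count.
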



In order to prove Lemma \ref{lemma:main}, we proceed by a density increment argument. The next lemma contains the statement of the main density increment step.

\begin{lemma}\label{lemma:density_increment}
Let $0<\eps<1/4$ and $0<\alpha<\min\{\frac{1}{12}-\frac{\eps}{6},\frac{1}{6}-\frac{2\eps}{3}\}$, and let $n$ be sufficiently large with respect to $\eps,\alpha$. Let $G$ be an $n$-vertex graph with edge density $1-p$, where $n^{-\alpha}\leq p<10^{-5}$. Assume that $\surp^*(G)<n^{1+\eps}$. Then $G$ contains an $n/4$-vertex induced subgraph of edge density at least $1-10^8 p^3$.
\end{lemma}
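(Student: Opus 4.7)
I would attack Lemma~\ref{lemma:density_increment} by combining the cubic spectral bound from Lemma~\ref{lemma:surp_star}(iii) with a structural/Hadamard-product analysis of the complement $\overline G$, via three steps: preprocessing to tame $\Delta(\overline G)$, deriving a triangle bound on $\overline G$ via the cubic surplus inequality, and extracting the sparse $n/4$-vertex subgraph. The first two steps should be routine, but the third is the essential and most delicate part.

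First, I would preprocess $G$ by deleting every vertex of $\overline G$-degree exceeding $Cpn$, for a sufficiently large absolute constant $C$. Since $\sum_v d_{\overline G}(v)=pn(n-1)$, Markov's inequality implies that at most $n/C$ such vertices exist, so the resulting induced subgraph (still denoted $G$ for brevity) has at least $(1-1/C)n$ vertices, satisfies $\surp^*(G)<n^{1+\eps}$ by monotonicity of $\surp^*(\cdot)$, has complement density still $\Theta(p)$, and now satisfies $\Delta(\overline G)\leq Cpn$.

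Second, applying Lemma~\ref{lemma:surp_star}(iii) to $G$ gives
\[\sum_{\lambda_i(A)<0}|\lambda_i|^3 \;\leq\; O\bigl((\Delta(\overline G)+1)\cdot\surp^*(G)\bigr) \;=\; O(pn^{2+\eps}).\]
By Lemma~\ref{lemma:max_entry}, the principal eigenvector $v_1$ of $A$ is close to $\mathbf{1}/\sqrt n$, so using $\overline A=J-I-A$ one sees that $\mu_1(\overline A)\approx pn$ and $\mu_i(\overline A)\approx -(\lambda_i(A)+1)$ for $i\geq 2$. Therefore
\[6T(\overline G) \;=\; \trace(\overline A^3) \;\leq\; \mu_1(\overline A)^3 + \sum_{i\geq 2,\,\mu_i(\overline A)>0}\mu_i^3 \;\leq\; O(p^3 n^3) + O(pn^{2+\eps}),\]
and under the constraints $\alpha<1/12-\eps/6$ and $\alpha<1/6-2\eps/3$ (which ensure $p^2n\gg n^\eps$), the first term dominates and we obtain $T(\overline G)\leq O(p^3n^3)$, matching the $G(n,p)$ prediction.

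The main difficulty will be the final extraction step: producing an $n/4$-subset $S$ with $e(\overline G[S])\leq 10^8 p^3 n^2$. Plain random sampling fails, since a variance calculation using the preprocessed cherry-count bound $\sum_v \binom{d_{\overline G}(v)}{2}=O(p^2 n^3)$ shows that $e(\overline G[S])$ is sharply concentrated at $\Theta(pn^2)$ for a uniformly random $n/4$-set $S$, exceeding the target by a factor of $p^{-2}$. Instead, the three-term Hadamard products flagged in the introduction should enter: I plan to form a PSD test matrix $X$ built from the positive non-principal spectral part of $\overline A$ -- where $\sum_{i\geq 2,\,\mu_i>0}\mu_i^3\leq O(pn^{2+\eps})$ gives strong control -- and use its Schur cube $X^{\circ 3}$ as a test matrix in the SDP formulation of $\surp^*(G)$. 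A double-counting argument should then show that if every $n/4$-subset had $\overline G$-density above $10^8 p^3$, then $X^{\circ 3}$ would witness $\surp^*(G)>n^{1+\eps}$, contradicting the hypothesis. Engineering this test matrix -- keeping its Schur-cube diagonal bounded while aligning signs with $\overline G$-edges in a way that produces the $p^3$ gain (rather than only the easier $p^{3/2}$ from a single Schur square) -- is where I expect the main technical work to lie.
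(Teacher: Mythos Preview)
Your intuition that a Schur cube is the right engine is correct, but the way you propose to deploy it is inverted relative to the paper, and this is where the genuine gap lies. The paper does \emph{not} use the cube as an SDP test matrix to lower-bound $\surp^*(G)$ and reach a contradiction over all $n/4$-subsets. Instead, it takes the positive non-principal spectral part of $A$ (not of $\overline A$): writing $B=A-\lambda_1 v_1 v_1^T$ and $E=\sum_{\lambda_i<0}|\lambda_i|v_iv_i^T$, the matrix $B+E=\sum_{i\ge 2,\,\lambda_i>0}\lambda_i v_iv_i^T$ is PSD, hence so is $D=(B+E)^{\circ 3}$. One then \emph{evaluates} $0\le \mathds 1_I^T D\,\mathds 1_I$ for an explicitly chosen set $I$ of ``good'' vertices (those with $E_{i,i}\le 4n^\eps$ and $v_1(i)\ge(1-8p)/\sqrt n$, of which there are at least $n/4$), expands $D=B^{\circ 3}+3B^{\circ 2}\circ E+3B\circ E^{\circ 2}+E^{\circ 3}$, and bounds the three error terms using only $\trace(E)\le n^{1+\eps}$ and $\|E\|_F^2=O(n^{3/2+\eps})$ from Lemma~\ref{lemma:surp_star}. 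The whole argument is a single forward inequality, not a contradiction.

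The mechanism that produces the $p^3$ gain is not in the ``positive part'' you isolate but in the indefinite matrix $B$ itself: for $i,j\in I$ one has $B_{i,j}=1-\lambda_1 v_1(i)v_1(j)\in[-O(p),\,O(p)]$ when $ij\in E(G)$, and $B_{i,j}=-\lambda_1 v_1(i)v_1(j)\le -1/2$ when $ij\notin E(G)$. Cubing preserves signs and amplifies the asymmetry, so $\mathds 1_I^T B^{\circ 3}\mathds 1_I\le O(|I|^2 p^3)-\tfrac14 e(\overline G[I])$; combined with the error bounds and the constraints on $\alpha$ (which are exactly what make $n^{7/4+\eps/2}+n^{3/2+2\eps}\ll n^2 p^3$), this gives $e(\overline G[I])=O(p^3|I|^2)$ directly. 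Your preprocessing to control $\Delta(\overline G)$ and the triangle count $T(\overline G)=O(p^3 n^3)$ are both correct but play no role here; the degree control that matters is not $\Delta(\overline G)$ but the diagonal of $E$, and the set $I$ is built from that, not from Markov on degrees.
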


\begin{proof}
Let $A$ be the adjacency matrix of $G$ with eigenvalues $\lambda_1\geq \dots\geq \lambda_n$ and corresponding orthonormal basis of eigenvectors $v_1,\dots,v_n$. Define $B=A-\lambda_1v_1v_1^T$ and $E=\sum_{\lambda_i<0}|\lambda_i| v_iv_i^T$. Then, the matrices $E$ and $B+E=\sum_{\lambda_i>0,i\neq 1}\lambda_i v_iv_i^T$ are positive semidefinite. 

The key idea of the proof is to consider the following triple Hadamard product:
$$D=(B+E)^{\circ 3}=B^{\circ 3}+3B\circ B\circ E+3B\circ E\circ E+E^{\circ 3}.$$
As $B+E$ is positive semidefinite, so is $D$ by the Schur product theorem. Then, using the matrix $E$ and the principal eigenvector $v_1$, we identify a set of well-behaved vertices $I$, and evaluate the product \[0\leq \mathds{1}_I^T D \mathds{1}_I= \mathds{1}_I^TB^{\circ 3}\mathds{1}_I+\mathds{1}_I^T\big(3B\circ B\circ E+3B\circ E\circ E+E^{\circ 3}\big) \mathds{1}_I.\]
By carefully analyzing the terms of this product, we will conclude that the graph $\overline{G}[I]$ must be \textit{much} sparser than $G$.

We will now give the details. First, observe that 
$$\trace(E)=\sum_{\lambda_i<0}|\lambda_i|\leq \surp^*(G)\leq n^{1+\eps}$$
by \textit{(i)} in Lemma \ref{lemma:surp_star}, and 
$$\|E\|_F^2=\sum_{\lambda_i<0}\lambda_i^2\leq O(\sqrt{\Delta+1})\surp^*(G)\leq O(\sqrt{\Delta+1}) n^{1+\eps}\leq O(n^{3/2+\eps})$$
by \textit{(ii)} in Lemma \ref{lemma:surp_star}. Let $I$ be the set of vertices $i\in [n]$ that satisfy $E_{i,i}\leq 4n^{\eps}$ and $v_1(i)\geq (1-8p)/\sqrt{n}$.
\begin{claim}
$|I|\geq n/4$.
\end{claim}
\begin{proof}
Let $I_0$ be the set of vertices $i\in [n]$ such that $v_1(i)\geq (1-8p)/\sqrt{n}.$ Then using Lemma \ref{lemma:max_entry},
\begin{align*}
    1&=\sum_{i=1}^nv_1(i)^2\leq |I_0|\frac{(1+2p+2/n)^2}{n}+(n-|I_0|)\frac{(1-8p)^2}{n}\\
    &\leq \frac{1}{n}\left(|I_0|(1+8p)+(n-|I_0|)(1-8p)\right)=(1-8p)+\frac{16p|I_0|}{n}.
\end{align*}
From this, we get $|I_0|\geq n/2$. Now $I$ is those set of vertices $i\in I_0$ that satisfy $E_{i,i}\leq 4n^{\eps}$. As $\trace(E_{i,i})\leq n^{1+\eps}$, the number of vertices such that $E_{i,i}>4n^{\eps}$ is at most $n/4$, giving the desired bound $|I|\geq n/4$. 
\end{proof}

We now evaluate the terms of the $\mathds{1}_I^T D \mathds{1}_I$, starting with the main term $\mathds{1}_I^T B^{\circ 3} \mathds{1}_I$. 
\begin{claim} \label{claim:main term}
$\mathds{1}^T_I B^{\circ 3}\mathds{1}_I\leq 10^6|I|^2p^3-e(\overline{G}[I])/4$.
\end{claim}
\begin{proof}
We have 
$$B_{i,j}=\begin{cases}
    1-\lambda_1v_1(i)v_1(j) &\mbox{if }ij\in E(G),\\
    -\lambda_1v_1(i)v_1(j) &\mbox{if }ij\notin E(G).
\end{cases}$$
If $i,j\in I$, then
$$1-\lambda_1v_1(i)v_1(j)\leq 1-(1-p)(n-1)\cdot \left(\frac{1-8p}{\sqrt{n}}\right)^2\leq 100p,$$
and  thus $\lambda_1v_1(i)v_1(j)>\frac{1}{2}$. Therefore,
$$\mathds{1}^T_I B^{\circ 3}\mathds{1}_I=\sum_{i,j\in I, i\sim j} (1-\lambda_1v_1(i)v_1(j))^3-\sum_{i,j\in I, i\not\sim j}(\lambda_1v_1(i)v_1(j))^3\leq 100^3|I|^2p^3-2e(\overline{G}[I])/8.$$
\end{proof}

\begin{claim} \label{claim:error term}
We have
\[\mathds{1}^T_I (3B\circ B\circ E+3B\circ E\circ E+E^{\circ 3})\mathds{1}_I\leq O(n^{7/4+\eps/2}+n^{3/2+2\eps}).\]
\end{claim}
\begin{proof}
We bound each summand of the error term independently. Firstly, note that every entry of $B$ is between $1$ and $-2$, as $0\leq \lambda_1v_1(i)v_2(j)\leq n\left(\frac{1+2p+2/n}{\sqrt{n}}\right)^2\leq 2$. Therefore, we have 
$$\mathds{1}^T_I (3B\circ B\circ E)\mathds{1}_I\leq 12\sum_{i,j\in I} |E_{i,j}|\leq 12|I|\sqrt{\sum_{i,j\in I}E_{i,j}^2}\leq 12n\|E\|_F\leq O(n^{7/4+\eps/2}).$$
Here, the second inequality holds by the inequality between the arithmetic and quadratic mean.

To bound the second summand, we again use that entries of $B$ are bounded by $2$ in absolute value, and so
$$\mathds{1}^T_I (3B\circ E\circ E)\mathds{1}_I\leq 6\sum_{i,j\in I} E_{i,j}^2\leq 6\|E\|_F^2\leq O(n^{3/2+\eps}).$$
Finally, in bounding the last term we will use that $E_{i, i}\leq n^\eps$ for all $i\in I$. In particular, since $E$ is a positive definite matrix, this implies that $|E_{i, j}|\leq n^{\eps}$ for all $i, j\in I$. So,
$$\mathds{1}^T_I E^{\circ 3}\mathds{1}_I\leq \sum_{i,j\in I} |E_{i,j}|^3\leq \max_{i,j\in I} |E_{i,j}|\cdot \sum_{i,j\in I}|E_{i,j}|^2\leq n^\eps \cdot \|E\|_F^2\leq O(n^{3/2+2\eps}).$$
The conclusion now follows by summing up the bounds obtained on each of the error terms above.
\end{proof}

In conclusion, we proved that 
$$0\leq \mathds{1}^T_I D\mathds{1}_I\leq 10^6|I|^2p^3-e(\overline{G}[I])/4+O(n^{7/4+\eps/2}+n^{3/2+2\eps})\leq 10^7|I|^2p^3-e(\overline{G}[I])/4,$$
where in the last inequality we used our bounds on $p$ and $\eps$, and that $n$ is sufficiently large. Hence, we must have $e(\overline{G}[I])\leq 10^8 |I|^2p^3/2$, which shows that the edge density of $G[I]$ is at least $(1-10^8p^3)$.
\end{proof}

\begin{proof}[Proof of Lemma \ref{lemma:main}]
    Let $\eps_0>\eps$ and $\alpha_0>\alpha$ be real numbers (independent of $n$) such that $\eps_0<1/4$ and $\alpha_0<\min\{\frac{1}{12}-\frac{\eps_0}{6},\frac{1}{6}-\frac{2\eps_0}{3}\}$. Let $1-p$ be the edge density of $G$. Let $G_0=G$, and define the sequence of induced subgraphs $G_0\supset G_1\supset...$ as follows. If $G_i$ is already defined with $n_i$ vertices and edge density $1-p_i$, then stop if either $p_i<n^{-\alpha}$ or $n_i<\max\{n^{(1+\eps)/(1+\eps_0)},n^{\alpha/\alpha_0}\}$. Otherwise, let $G_{i+1}$ be an induced subgraph of $G_i$ on at least $n_i/4$ vertices of edge density at least $1-10^8p_i^3$. 

    First, we argue that if we did not stop at $G_i$, we can indeed find a suitable subgraph $G_{i+1}$. Note that $\surp^*(G)\geq \surp^*(G_i)$, so if $n_i\geq \max\{n^{(1+\eps)/(1+\eps_0)},n^{\alpha/\alpha_0}\}$, then $\surp^*(G_i)\leq n^{1+\eps}<n_i^{1+\eps_0}$. Moreover, using that $10^8p^3<p$ for $p<10^{-5}$, it follows by induction that the edge density of $G_i$ is at least $1-10^{-5}$. Hence, if $p_i>n^{-\alpha}\geq n_i^{-\alpha_0}$, then  an application of Lemma \ref{lemma:density_increment} with $\eps_0$ and $\alpha_0$ instead of $\eps$ and $\alpha$ guarantees the existence of $G_{i+1}$.
    
     Let $I$ be the last index $i$ for which $G_i$ is defined. Then $n_I\geq n4^{-I}$ and $p_{I}<(10^{8})^{3^{I-1}+3^{I-2}+\dots+1}p^{3^I}<(10^4p)^{3^I}<10^{-3^I}$. Hence, we must have stopped because $p_{I}<n^{-\alpha}$, which happens for some $I=O(\log \log n)$. But then $n_I\geq n4^{-O(\log\log n)}=n^{1-o(1)}$, and thus $G_I$ suffices.
\end{proof}

\section{Even denser graphs with small surplus}\label{sect:2}

The main result of this section is the following lemma, which can be used to further boost Lemma \ref{lemma:main}. Interestingly, the proof of this is quite different from the proof of Lemma \ref{lemma:main}.

\begin{lemma}\label{lemma:very_dense}
Let $0<\eps<\alpha$, then the following holds if $n$ is sufficiently large. Let $G$ be an $n$-vertex graph of edge density at least $1-n^{-\alpha}$. If $\surp^*(G)\leq n^{1+\eps}$, then $G$ contains an induced subgraph of density at least $(1-O((\log n)^2n^{2\eps-1}))$ on at least $n/2$ vertices.
\end{lemma}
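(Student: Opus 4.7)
The plan is to find a subset $S\subseteq V(G)$ with $|S|\ge n/2$ on which the number of non-edges is controlled via a Schatten--H\"older inequality applied to the ``non-principal'' part $B=A-\lambda_1 v_1 v_1^T$ of the adjacency matrix.

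First, the nuclear norm of $B$ is controlled by the surplus. Since the eigenvalues of $B$ are $0$ (in the direction $v_1$) and $\lambda_i$ for $i\neq 1$, Lemma~\ref{lemma:surp_star}(i) combined with the trace identity $\sum_i\lambda_i=0$ gives
$$\|B\|_*:=\sum_{i\neq 1}|\lambda_i|\leq 2n^{1+\eps}.$$
Next, Lemma~\ref{lemma:max_entry} (applied with $p\le n^{-\alpha}$) shows that all but $o(n)$ vertices $i$ are ``typical'' in the sense that $v_1(i)=(1\pm o(1))/\sqrt n$. For any non-edge $(i,j)$ between typical vertices, $B_{ij}=-\lambda_1 v_1(i)v_1(j)\leq -1/2$, and hence every such non-edge contributes at least $1/4$ to $\|B[S]\|_F^2$. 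Letting $S_0$ be the set of typical vertices (so $|S_0|\geq n-o(n)$), we obtain
$$e(\overline{G}[S_0]) \;\leq\; O\big(\|B[S_0]\|_F^2\big) \;\leq\; O\big(\|B[S_0]\|_{\mathrm{op}}\cdot\|B[S_0]\|_*\big) \;\leq\; O(n^{1+\eps})\cdot\|B[S_0]\|_{\mathrm{op}},$$
where the middle inequality uses the general bound $\sum_i\sigma_i^2\le(\max_i\sigma_i)(\sum_i\sigma_i)$ for singular values. Thus it suffices to pass to a further induced subgraph $S\subseteq S_0$ with $|S|\geq n/2$ and $\|B[S]\|_{\mathrm{op}}\leq O((\log n)^2 n^\eps)$, since this yields $e(\overline{G}[S])\le O((\log n)^2 n^{1+2\eps})$, giving the target density.

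The main step, and the principal obstacle, is to reduce the operator norm of $B$ to $O((\log n)^2 n^\eps)$ by removing $o(n)$ vertices. The natural strategy is a dyadic spectral peeling: partition the eigenvalues of $B$ into scales $|\lambda_i|\in[2^k,2^{k+1})$. The nuclear norm bound limits the number of eigenvalues at scale $\ge 2^k$ to $O(n^{1+\eps}/2^k)$, so only $O(\log n)$ scales contain any eigenvalue of absolute value exceeding $n^\eps$. At each such scale, one peels off the vertices carrying the greatest leverage on the corresponding invariant subspace; the nuclear bound ensures this costs only $O(n/\log^2 n)$ vertices per scale (essentially one per large eigenvalue, plus a slack) while driving that scale's contribution to $\|B[S]\|_{\mathrm{op}}$ down to $O(n^\eps\log n)$. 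Summing across $O(\log n)$ scales removes $O(n/\log n)$ vertices in total and gives $\|B[S]\|_{\mathrm{op}}\leq O((\log n)^2 n^\eps)$, as required. Executing this peeling rigorously---so that the restriction genuinely reduces the operator norm simultaneously across all scales while preserving a $(1-o(1))$ fraction of vertices---is the technical heart of the argument, and would plausibly rely on iterated Cauchy interlacing or an averaging procedure reminiscent of leverage-score spectral sparsification.
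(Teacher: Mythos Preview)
Your reduction of $e(\overline{G}[S])$ to the product $\|B[S]\|_*\cdot\|B[S]\|_{\mathrm{op}}$ is clean, and the nuclear norm bound $\|B\|_*\le 2n^{1+\eps}$ is correct. The fatal gap is the ``spectral peeling'' step: the claim that one can pass to some $S$ with $|S|\ge n/2$ and $\|B[S]\|_{\mathrm{op}}\le O((\log n)^2 n^\eps)$ is not merely unproven---it is false under the hypotheses of the lemma. Let $\overline{G}$ be a random bipartite graph between $L=[n/2]$ and $R=[n/2+1,n]$ with edge probability $q=cn^{2\eps-1}$ for a small constant $c>0$. Then $G$ has density $1-q/2\ge 1-n^{-\alpha}$ for any fixed $\alpha<1-2\eps$, and $\surp^*(G)\le n|\lambda_n(G)|=n\bigl(1+O(\sqrt{qn})\bigr)\le n^{1+\eps}$. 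Now take \emph{any} $S$ with $|S|\ge n/2$ and test against $w_S=(\mathbf{1}_{S\cap L}-\mathbf{1}_{S\cap R})/\sqrt{|S|}$. A direct calculation using $v_1\approx\mathbf{1}/\sqrt n$ and $\lambda_1\approx(1-q/2)n$ gives
\[
w_S^T B[S]\,w_S \;=\; \tfrac{q}{2}\,|S|\;-\;1\;+\;o(1)\;\ge\; \tfrac{qn}{4}-1\;=\;\Omega(n^{2\eps}),
\]
independently of how $S$ meets $L$ and $R$. Hence $\|B[S]\|_{\mathrm{op}}=\Omega(n^{2\eps})$ for \emph{every} half-sized $S$, exceeding your target by a factor $n^\eps/(\log n)^2$. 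The mechanism is that the offending eigenvalue $\lambda_2(G)\approx qn/2$ has a completely delocalized eigenvector, so every vertex carries the same leverage $1/n$ and no vertex-removal scheme based on leverage scores (or Cauchy interlacing, which only guarantees $\lambda_1(B[S])\ge\lambda_{|S^c|+1}(B)$) can touch it. Note also that even the weaker target $\|B[S]\|_{\mathrm{op}}=O(n^{2\eps})$ would only yield density $1-O(n^{3\eps-1})$, strictly worse than the stated conclusion.

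The paper's argument is structurally different and never attempts to control $\|B[S]\|_{\mathrm{op}}$. It first performs a \emph{combinatorial} peeling on the complement---iteratively deleting vertices whose degree in $\overline{G}$ exceeds $C/2$ times the current average, with $C=4\log_2 n$ (Lemma~\ref{lemma:balanced})---to obtain an induced $G_0$ on at least $n/2$ vertices whose complement is $C$-balanced. It then proves a direct lower bound (Lemma~\ref{lemma:main_very_dense})
\[
\surp^*(G_0)\;\ge\;\Omega\!\left(\min\Bigl\{\tfrac{n}{C^3 p_0},\,C^{-1}p_0^{1/2}n^{3/2}\Bigr\}\right),
\]
by transferring the negative-eigenvalue inequalities of Lemma~\ref{lemma:surp_star} to the positive eigenvalues $\mu_i$ of $\overline{G_0}$ via Weyl interlacing (Lemma~\ref{lemma:weyl}), and then playing the moment identities $\sum\mu_i=0$, $\sum\mu_i^2=2e(\overline{G_0})$, $\sum\mu_i^3\ge 0$ against one another. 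Since $p_0\le n^{-\alpha}$ forces the first term to exceed $n^{1+\eps}$, the hypothesis $\surp^*(G_0)\le n^{1+\eps}$ pins down $p_0\le O(C^2 n^{2\eps-1})=O((\log n)^2 n^{2\eps-1})$.
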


In order to prove this, it is more convenient to work with the complement $\overline{G}$ of $G$. Let $\lambda_1\geq \dots\geq \lambda_n$ be the eigenvalues of $G$, and let $\mu_1\geq \dots\geq \mu_n$ be the eigenvalues of $\overline{G}$. Unfortunately, as $G$ is not necessarily regular, there is no simple formula to express $\mu_i$ in terms of $\lambda_1,\dots,\lambda_n$. However, we can use Weyl's inequality to establish the following interlacing property.

\begin{lemma}\label{lemma:weyl}
Let $G$ be an $n$ vertex graph with eigenvalues $\lambda_1\geq \dots\geq \lambda_n$, and let $\mu_1\geq \dots\geq \mu_n$ be the eigenvalues of the complement of $G$. For $i=1,2\dots,n-1$, $$1+\mu_{i+1}\leq -\lambda_{n+1-i}.$$
\end{lemma}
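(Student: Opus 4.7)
The plan is to apply Weyl's inequality to the identity $A + \overline{A} = J - I$, where $A$ and $\overline{A}$ are the adjacency matrices of $G$ and $\overline{G}$, and $J$ is the all-ones $n \times n$ matrix. The matrix $J - I$ has a very simple spectrum: one eigenvalue equal to $n-1$ (for the all-ones eigenvector) and the remaining $n-1$ eigenvalues all equal to $-1$. In particular, the second largest eigenvalue of $J - I$ is exactly $-1$, which matches the form of the desired conclusion.

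The version of Weyl's inequality I would use is the ``dual'' form: for symmetric $n \times n$ matrices $M, N$ with sum $M+N$, and for indices $r, s$ with $r + s \geq n+1$,
\[
\lambda_{r+s-n}(M+N) \;\geq\; \lambda_r(M) + \lambda_s(N),
\]
where $\lambda_k(\cdot)$ denotes the $k$-th largest eigenvalue. I would apply this with $M = \overline{A}$, $N = A$, $r = i+1$, and $s = n+1-i$, so that $r + s - n = 2$. This immediately yields
\[
\lambda_2(J - I) \;\geq\; \mu_{i+1} + \lambda_{n+1-i}.
\]
Since $\lambda_2(J-I) = -1$, rearranging gives $1 + \mu_{i+1} \leq -\lambda_{n+1-i}$, which is exactly the claim.

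The only point to be slightly careful about is choosing the correct branch of Weyl's inequality. The ``primal'' form $\lambda_{r+s-1}(M+N) \leq \lambda_r(M) + \lambda_s(N)$ (valid when $r+s-1 \leq n$) would require indices summing to at most $n+1$, which is the wrong regime here since $(i+1) + (n+1-i) = n+2$. So the proof essentially reduces to recognizing that we are in the dual regime and that the relevant eigenvalue of $J - I$ is $-1$. There is no real obstacle; the lemma is a one-line consequence of Weyl once the identity $A + \overline{A} = J - I$ and the spectrum of $J - I$ are written down.
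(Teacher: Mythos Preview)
Your proof is correct and is essentially the same as the paper's: both apply Weyl's inequality to the identity $A+\overline{A}=J-I$ and use that $\lambda_2(J-I)=-1$. The only cosmetic difference is that the paper negates $A$ (writing $B=(-A)+(J-I)$) so as to use the ``primal'' form $\lambda_{i+j-1}(X+Y)\le\lambda_i(X)+\lambda_j(Y)$ with $j=2$, whereas you keep $A$ and $\overline{A}$ as they are and invoke the equivalent ``dual'' form; the two are interchangeable here.
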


\begin{proof}
Weyl's inequality states that if $X$ and $Y$ are $n\times n$ symmetric matrices, and $1\leq i,j\leq n$ and $i+j\leq n+1$, then $$\lambda_{i+j-1}(X+Y)\leq \lambda_i(X)+\lambda_j(Y),$$
where $\lambda_1(X)\geq \dots\geq \lambda_n(X)$ denote the eigenvalues of a matrix $X$. Let $A$ be the adjacency matrix of $G$, and let $B$ be the adjacency matrix of $\overline{G}$. Then $B=J-I-A$. Let $X=-A$ and $Y=J-I$, then $\lambda_i(X)=-\lambda_{n+1-i}$, $\lambda_1(Y)=n-1$, $\lambda_i(X)=-1$ for $i=2,\dots,n$, and $\lambda_i(X+Y)=\mu_i$. Hence, applying the above inequality with $j=2$, we get
$$\mu_{i+1}\leq -\lambda_{n+1-i}-1.$$
\end{proof}

The next lemma provides a bound on the surplus of very dense graphs. The result and its proof are similar to the proof of Lemma 5.9 of R\"aty, Sudakov and Tomon \cite{RST} for the complementary quantity called as \emph{positive discrepancy}. Say that a graph $G$ is \emph{$C$-balanced} if $\Delta(G)\leq Cd(G)$. 

\begin{lemma}\label{lemma:main_very_dense}
Let $G$ be an $n$-vertex graph of density $(1-p)$ such that the complement of $G$ is $C$-balanced, and $p<0.001C^{-2}$. Then 
$$\surp^*(G)\geq \Omega\left(\min\left\{\frac{n}{C^3p},C^{-1}p^{1/2}n^{3/2}\right\}\right).$$ 
\end{lemma}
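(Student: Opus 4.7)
The plan is to derive both bounds by applying parts (ii) and (iii) of Lemma \ref{lemma:surp_star} to $G$, using Weyl's interlacing (Lemma \ref{lemma:weyl}) to transfer spectral information from the sparse complement $\overline{G}$ onto the negative eigenvalues of $G$. Write $\mu_1 \geq \cdots \geq \mu_n$ for the eigenvalues of $\overline{G}$. First I would record the basic identities $\sum_k \mu_k = 0$ and $\sum_k \mu_k^2 = 2e(\overline{G}) = pn(n-1)$, together with $\mu_1 \leq \Delta(\overline{G}) \leq Cpn$ from $C$-balancedness. Combined with the hypothesis $p < 10^{-3}C^{-2}$, these inputs give $\sum_{k \geq 2} \mu_k^2 \geq pn^2 - \mu_1^2 \geq pn^2(1-C^2p) \geq (1-o(1))pn^2$.

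The main step would then be a case split according to whether this $L^2$-mass sits on non-negative or on negative $\mu_k$. Set $S_+ = \sum_{k \geq 2,\, \mu_k \geq 0}\mu_k^2$ and $S_- = \sum_{\mu_k < 0} \mu_k^2$, so $S_+ + S_- \geq pn^2/2$. In the case $S_+ \geq pn^2/4$, Lemma \ref{lemma:weyl} produces, for every $k \geq 2$ with $\mu_k \geq 0$, a negative eigenvalue $\lambda_{n+2-k}$ of $G$ with $|\lambda_{n+2-k}| \geq 1+\mu_k$, so $\sum_{\lambda_i < 0}\lambda_i^2 \geq S_+ \geq pn^2/4$. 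Lemma \ref{lemma:surp_star}(ii) then yields $\surp^*(G) = \Omega(pn^2/\sqrt{Cpn}) = \Omega(C^{-1/2}p^{1/2}n^{3/2})$, which, using $C \geq 1$, dominates both $n/(C^3p)$ and $C^{-1}p^{1/2}n^{3/2}$ in their respective parameter regimes (a short calculation shows $C^{-1/2}p^{1/2}n^{3/2} \geq n/(C^3p)$ whenever $p \geq C^{-5/3}n^{-1/3}$, which covers the entire range where $n/(C^3p)$ is the smaller of the two target bounds).

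The harder case is when $S_-$ dominates; then $\overline{G}$'s spectrum is ``bipartite-like'' and Weyl no longer furnishes many large negative eigenvalues of $G$ directly. The plan for this case is to work from the trace identities $\sum_i \lambda_i = 0$ and $\sum_i \lambda_i^2 = n(n-1)(1-p)$ together with $\lambda_1 \approx (1-p)n$. The key observation is that any positive $\lambda_i$ with $i \geq 2$ forces $\mu_{n+2-i} < -1$ via the Weyl upper bound $\lambda_i \leq -1 - \mu_{n+2-i}$, which yields $\sum_{\lambda_i > 0,\, i \geq 2}\lambda_i^2 \leq \sum_{\mu_k < -1}\mu_k^2 \leq S_-$. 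Combined with $\sum_{i \geq 2}\lambda_i^2 \approx pn^2 + n$ this keeps $\sum_{\lambda_i < 0}\lambda_i^2$ comparable to $pn^2$ in a useful subregime; then Cauchy--Schwarz $(\sum|\lambda_i|)(\sum|\lambda_i|^3)\geq (\sum\lambda_i^2)^2$, together with the bound $\sum_{\lambda_i<0}|\lambda_i| = \sum_{\lambda_i>0}\lambda_i = O(n)$, gives $\sum_{\lambda_i<0}|\lambda_i|^3 = \Omega(p^2n^3)$, and Lemma \ref{lemma:surp_star}(iii) finally produces $\surp^*(G) = \Omega(pn^2/C)$, which beats $n/(C^3p)$ exactly in the regime where the latter is the target.

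The main obstacle I anticipate is the ``extreme'' slice of the second case, where $S_-$ is close to its maximum $pn^2$: here the above upper bound $P_2 \leq S_-$ alone does not rescue a strong enough lower bound on $\sum_{\lambda_i<0}\lambda_i^2$, and one has to bring in the constraint $\sum\mu_k^3 = 6\cdot\#\triangle(\overline{G}) \geq 0$ (which, combined with $|\mu_k| \leq Cpn$, effectively rules out the regime in which $S_-$ is much larger than $S_+$), or pass to a bipartite-like substructure of $\overline{G}$ and work on the dense near-clique induced subgraphs of $G$. Keeping the bookkeeping of $C$-factors honest across both paths—so that the final exponent of $C$ in the $n/(C^3p)$ term is exactly $3$—is the other delicate point.
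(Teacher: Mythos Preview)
Your first case (when $S_+=P_2$ carries at least $pn^2/4$ of the mass) is correct and is exactly what the paper does: Weyl pushes $P_2$ into $\sum_{\lambda_i<0}\lambda_i^2$ and Lemma~\ref{lemma:surp_star}(ii) finishes. The difficulty, as you already sense, is entirely in the second case, and there your plan has two genuine gaps.

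First, the step ``$\sum_{\lambda_i<0}|\lambda_i|=\sum_{\lambda_i>0}\lambda_i=O(n)$'' is not justified. Trace zero gives the equality, but the only upper bound available is $\sum_{\lambda_i<0}|\lambda_i|\le\surp^*(G)$ from Lemma~\ref{lemma:surp_star}(i), and you cannot assume this is $O(n)$: the target $n/(C^3p)$ may be far larger than $n$. Likewise the estimate $\sum_{i\ge 2}\lambda_i^2\approx pn^2$ requires control of $\lambda_1$ up to additive error $o(\sqrt{p}\,n)$, which you have not established. Second, the triangle constraint $\sum_k\mu_k^3\ge 0$ does \emph{not} rule out $S_-\gg S_+$: a disjoint union of complete bipartite graphs in $\overline{G}$ already has $\sum\mu_k^3=0$ with $S_-$ as large as you like relative to $S_+$.

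The paper's fix stays on the $\mu$-side throughout and uses Lemma~\ref{lemma:surp_star}(i), which you never invoke. Writing $P_k,N_k$ for the positive/negative $k$-th power sums of the $\mu_i$ (excluding $\mu_1$ from $P_k$), the triangle identity gives $N_3-P_3\le\mu_1^3\le\Delta^3$. Now sub-split: if $P_3\ge N_3/2$, then since $N_2\ge pn^2/8$ forces $N_3\ge N_2^{3/2}n^{-1/2}=\Omega(p^{3/2}n^{5/2})$, Lemma~\ref{lemma:surp_star}(iii) gives $\surp^*(G)\ge\Omega(P_3/\Delta)=\Omega(C^{-1}p^{1/2}n^{3/2})$. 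Otherwise $N_3\le 2\Delta^3$, and Cauchy--Schwarz on the \emph{negative} $\mu$'s yields $N_1\ge N_2^2/N_3\ge\Omega(n/(C^3p))$; since $P_1=N_1-\mu_1$ and $\mu_1\le Cpn\ll n/(C^3p)$ under the hypothesis $p<10^{-3}C^{-2}$, Lemma~\ref{lemma:surp_star}(i) gives $\surp^*(G)\ge P_1=\Omega(n/(C^3p))$. The crucial missing idea in your plan is this last route through $N_1$ and part~(i); this is also where the exponent $3$ on $C$ comes from, via $N_3\le 2\Delta^3\le 2C^3p^3n^3$.
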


\begin{proof}
Let $A$ be the adjacency matrix of $G$ with eigenvalues $\lambda_1\geq \dots\geq \lambda_n$, and let $B$ be the adjacency matrix of $\overline{G}$ with eigenvalues $\mu_1\geq\dots\geq \mu_n$. Let $\Delta$ be the maximum degree of $\overline{G}$, so $\mu_1\leq \Delta\leq Cpn$. We may assume that $p>0$ and thus $\Delta\geq 1$, otherwise the statement is trivial. For $k=1,2,3$, set $$P_k=\sum_{i\neq 1, \mu_i>0}\mu_i^k\mbox{\ \ \ and\ \ \ } N_k=\sum_{\mu_i<0}|\mu_i|^k.$$ Lemma \ref{lemma:weyl} applied with $i\geq 2$ shows that whenever $\mu_i\geq 0$ we also have $\lambda_{n+1-i}\leq -\mu_i-1<0$. Combined with Lemma~\ref{lemma:surp_star}, this shows that
\begin{align*}
\surp^*(G)&\geq \sum_{\lambda_i<0}|\lambda_i|\geq \sum_{i\neq 1, \mu_i>0}\mu_i = P_1,\\
 \surp^*(G)&\geq \Omega\left(\frac{1}{\sqrt{\Delta}}\sum_{\lambda_i<0}|\lambda_i|^2\right)\geq\Omega\left(\frac{1}{\sqrt{\Delta}}\sum_{i\neq 1, \mu_i>0}\mu_i^2\right) = \Omega\left(\frac{1}{\sqrt{\Delta}}P_2\right),\\
\surp^*(G)&\geq \Omega\left(\frac{1}{\Delta}\sum_{\lambda_i<0}|\lambda_i|^3\right)\geq \Omega\left(\frac{1}{\Delta}\sum_{\mu_i>0}\mu_i^3\right)= \Omega\left(\frac{1}{\Delta}P_3\right).
\end{align*}
We show that these three inequalities together with some simple identities suffice to prove the lemma. 

First, assume that $N_2\leq \frac{1}{8}pn^2$. Note that $\mu_1^2+P_2+N_2=\|B\|_F^2$ is twice the number of edges of $\overline{G}$, so $\mu_1^2+P_2+N_2=2p\binom{n}{2}$, from which
$$P_2\geq pn^2/2-\mu_1^2-N_2\geq pn^2/2-C^2p^2n^2-pn^2/8\geq pn^2/4,$$
where we have used that $pC^2\leq 10^{-3}$ in the last inequality. But then $\surp^*(G)=\Omega(p^{1/2}n^{3/2})$ by the second highlighted inequality, and we are done.

Hence, in the rest of the proof, we may assume that $N_2\geq \frac{1}{8}pn^2$. By the inequality between the quadratic and cubic mean, we have
$$\left(\frac{N_2}{n}\right)^{1/2}\leq \left(\frac{N_3}{n}\right)^{1/3}$$
which gives $N_3\geq N_2^{3/2}n^{-1/2}\geq p^{3/2}n^{5/2}/64.$

Next, consider the quantity $T=N_3-P_3$. Observe that $\mu_1^3-T=\sum_{i=1}^n\mu_i^3$ is six-times the number of triangles of $\overline{G}$. In particular, $\mu_1^3-T$ it is nonnegative, showing that $T\leq \mu_1^3\leq \Delta^3$. Assume that $N_3\geq 2T$, then $P_3\geq N_3/2$. By the third highlighted inequality, we then have $$\surp^*(G)\geq \Omega\left(\frac{P_3}{\Delta}\right)\geq \Omega\left(\frac{N_3}{\Delta}\right)\geq \Omega(C^{-1}p^{1/2}n^{3/2}).$$
Hence, we are done in this case as well.

Finally, assume that $N_3\leq 2T$, then $\Delta^3\geq T\geq N_3/2$. By the Cauchy-Schwartz inequality applied to the sequences $(|\mu_i|^3)_{\mu_i<0}$ and $(|\mu_i|)_{\mu_i<0}$, we have the inequality $N_1N_3\geq N_2^2$, which gives 
$$N_1\geq \frac{N_2^2}{N_3}\geq \frac{p^2n^4}{128\Delta^3}\geq \frac{n}{128C^3p}.$$
But $0=\trace(B)=\mu_1+P_1-N_1$, from which $P_1=N_1-\mu_1\geq \frac{n}{128C^3p}-\Delta\geq \frac{n}{500C^3p}$. Hence, as $\surp^*(G)\geq P_1$, we are done. 
\end{proof}

Next, we present a simple technical lemma which shows that every graph contains a large induced $O(\log n)$-balanced graph.

\begin{lemma}\label{lemma:balanced}
Let $G$ be an $n$-vertex graph of edge density $p$, and let $C\geq 4\log_2 n$. Then $G$ contains a $C$-balanced induced subgraph on at least $(1-2\log_2 n/C)n$ vertices of edge density at most $p$.
\end{lemma}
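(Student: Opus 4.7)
The plan is a straightforward greedy-deletion argument: iteratively remove vertices of excessive degree until the graph becomes $C$-balanced. Set $G_0=G$. As long as $G_i$ is not $C$-balanced, pick any vertex $v_i\in V(G_i)$ with $\deg_{G_i}(v_i)>C\,d(G_i)$ and let $G_{i+1}=G_i-v_i$. The process stops once $G_i$ is $C$-balanced; in particular it stops if $G_i$ has no edges, which is vacuously $C$-balanced.

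The first point I would verify is that this process does not increase the edge density, so that the terminating graph inherits density at most $p$. A one-line calculation shows that deleting a vertex of degree at least $d(G_i)$ from the $n_i$-vertex graph $G_i$ produces an induced subgraph of no larger edge density (one just checks $\frac{e_i-\deg(v)}{\binom{n_i-1}{2}}\le \frac{e_i}{\binom{n_i}{2}}$ iff $\deg(v)\ge 2e_i/n_i$). Since each $v_i$ has degree strictly greater than $C\,d(G_i)\ge d(G_i)$, the edge density is monotone non-increasing along the process.

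The second point is the quantitative bound on the number of deletions. Writing $e_i=e(G_i)$ and $n_i=|V(G_i)|$, the choice of $v_i$ yields
\[
e_{i+1} < e_i - C\,d(G_i) = e_i\Bigl(1-\tfrac{2C}{n_i}\Bigr)\le e_i\Bigl(1-\tfrac{2C}{n}\Bigr),
\]
and iterating gives $e_k < (n^2/2)\,e^{-2Ck/n}$. Setting $k^\ast=\lceil n\ln n/C\rceil$ forces $e_{k^\ast}<1$, so $G_{k^\ast}$ has no edges and the process must already have terminated. Using $\ln n<\log_2 n$, we get $k^\ast \le 2n\log_2 n/C$ for $n$ large (the condition $C\ge 4\log_2 n$ also ensures this number is at most $n/2$, so the bound is meaningful). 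Thus at least $(1-2\log_2 n/C)n$ vertices survive.

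I view this lemma as bookkeeping rather than posing a real obstacle; the only mildly subtle ingredient is checking that the edge density itself — not merely the edge count — is monotone along the deletions, which is precisely the short calculation above. Once that is in place, the exponential-decay estimate on $e_i$ does the rest.
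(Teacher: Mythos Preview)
Your argument is correct and follows the same greedy high-degree deletion idea as the paper, just with different bookkeeping: you remove one offending vertex at a time and track the geometric decay of the edge count, whereas the paper removes all vertices of degree at least $Cd_i/2$ in each round and argues that the average degree halves, giving at most $\log_2 n$ rounds with at most $2n/C$ deletions each. Both analyses land on the same bound of at most $2n\log_2 n/C$ deleted vertices, and your one-at-a-time version has the pleasant feature that the density-monotonicity check is a clean one-line calculation.
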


\begin{proof}
Let $d=d(G)$ be the average degree of $G$. We perform an inductive process, where in each step we delete the vertices of high degree. More precisely, let $G_0=G$, and define the sequence of induced subgraphs $G_0\supset G_1\supset...$ as follows. Having already defined $G_i$, we denote its number of vertices by $n_i$ and its average degree by $d_i$. To define $G_{i+1}$, remove from $G_i$ all vertices of degree at least $Cd_i/2$, if any such vertices exist. The process halts once either all vertices of $G_i$ have degree less than $Cd_i/2$, or the average degree of $G_i$ is at least $d_{i-1}/2$. Let $I$ be the last index $i$ for which $G_i$ is defined. 

For each $i$, it is not hard to verify that the density of $G_i$ is smaller than the density of $G_{i-1}$. Moreover, since the average degree of $G_i$ is $d_i$, there are at most $2n_i/C$ vertices of degree larger than $Cd_i/2$ in $G_i$, so $v(G_i)\geq v(G_{i-1})-2n_i/C\geq v(G_{i-1})-2n/C$. Thus, we have $v(G_i)\geq n(1-2i/C)$ for all $i=1, \dots, I$. Furthermore, if the process did not halt at index $i$, we have $d_i\leq d_{i-1}/2$, and so $d_i\leq d2^{-i}$. This shows that $I\leq \log_2 n$ and $n_I\geq n(1-2\log_2 n/C)$. 

Finally, note that $G_I$ has no vertices of degree more than $C d(G_I)$. Indeed, if the process has halted because $G_I$ contains no vertices of $Cd_I/2$, this is immediate, and if the process has halted because $d_I\geq d_{I-1}/2$, then $G_I$ contains no vertex of degree more than $Cd_{I-1}/2\leq Cd_I$. Hence, $G_I$ is a $C$-balanced induced subgraph of $G$ on at least $(1-2\log_2/C)n$ vertices.
\end{proof}

\begin{proof}[Proof of Lemma \ref{lemma:very_dense}]
Let $1-p$ the edge density of $G$, and let $C=4\log_2 n$. Applying Lemma \ref{lemma:balanced} to the complement of $G$, we find an induced subgraph $G_0\subseteq G$ on at least $n/2$ vertices with edge density $1-p_0\geq 1-p$ such that the complement of $G_0$ is $C$-balanced. As $p_0\leq p\leq 0.001C^{-3}$, we can apply Lemma \ref{lemma:very_dense} to conclude that 
$$\surp^*(G_0)=\Omega\left(\min\left\{\frac{n}{C^3p_I},C^{-1}p_I^{1/2}n^{3/2}\right\}\right).$$
However, as $p_0<n^{-\alpha}$ and $\eps<\alpha$, the inequality $\surp^*(G_0)\leq n^{1+\eps}$ is only possible if 
$$p_0\leq O(C^2n^{2\eps-1})=O(n^{2\eps-1}(\log n)^2).$$
\end{proof}

We can combine Theorem \ref{thm:zhang}, Lemma \ref{lemma:main} and Lemma \ref{lemma:very_dense} into the following ``master lemma'', which shows that somewhat dense graphs contain large cliques.

\begin{lemma}\label{lemma:master}
Let $0<\eps<10^{-3}$. Let $G$ be a graph on $n$ vertices  with at least $n^{2-\eps}$ edges such that $\surp^*(G)\leq n^{1+\eps}$. Then $G$ contains a clique of size $n^{1-20\eps}$.
\end{lemma}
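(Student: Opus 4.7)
The plan is to chain the three main results of the section---Theorem \ref{thm:zhang}, Lemma \ref{lemma:main}, and Lemma \ref{lemma:very_dense}---to produce a sequence of successively denser induced subgraphs $G\supseteq G_1\supseteq G_2\supseteq G_3$, and then to extract a clique from $G_3$ by a Caro--Wei bound on its very sparse complement. Throughout, the monotonicity of $\surp^*$ under induced subgraphs is used freely, so $\surp^*(G_i)\leq\surp^*(G)\leq n^{1+\eps}$ at every stage.

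First I apply Theorem \ref{thm:zhang} with $\rho=\eps$ and $p=10^{-5}$: the hypotheses hold since $\eps<10^{-3}$, $e(G)\geq n^{2-\eps}$, and $\surp(G)\leq\surp^*(G)\leq n^{1+\eps}$, yielding $G_1\subseteq G$ on $n_1\geq n^{1-4\eps}$ vertices of density at least $1-10^{-5}$. Next I apply Lemma \ref{lemma:main} to $G_1$ with parameters $\eps_1=5\eps$ and, say, $\alpha_1=1/13$; the constraints $\eps_1<1/4$ and $\alpha_1<\min\{1/12-\eps_1/6,\,1/6-2\eps_1/3\}$ are comfortable for $\eps<10^{-3}$, and the hypothesis $\surp^*(G_1)\leq n_1^{1+\eps_1}$ follows from $n^{1+\eps}\leq n_1^{(1+\eps)/(1-4\eps)}\leq n_1^{1+5\eps}$. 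This produces $G_2\subseteq G_1$ on $n_2=n_1^{1-o(1)}$ vertices of density at least $1-n_1^{-\alpha_1}$. Finally I apply Lemma \ref{lemma:very_dense} to $G_2$ with $\alpha_2=1/26$ and $\eps_2=6\eps$: using $n_2=n_1^{1-o(1)}$, the density of $G_2$ is at least $1-n_2^{-\alpha_2}$ and $\surp^*(G_2)\leq n_2^{1+\eps_2}$ hold for $n$ large, while $\eps_2<\alpha_2$ since $\eps<10^{-3}$. This yields $G_3\subseteq G_2$ on $n_3\geq n_2/2$ vertices of density at least $1-O\big((\log n)^2 n_2^{12\eps-1}\big)$.

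To finish, I observe that $\overline{G_3}$ has average degree $O\big((\log n)^2 n_2^{12\eps}\big)$, so by the Caro--Wei inequality it contains an independent set---equivalently, $G_3$ contains a clique---of size
\[
\Omega\!\left(\frac{n_3}{(\log n)^2\, n_2^{12\eps}}\right)\ \geq\ \Omega\!\left(\frac{n_2^{1-12\eps}}{(\log n)^2}\right)\ \geq\ n^{(1-4\eps)(1-12\eps)-o(1)}\ \geq\ n^{1-16\eps-o(1)}\ \geq\ n^{1-20\eps}
\]
for $n$ sufficiently large, which is exactly the conclusion.

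The main---and essentially only---obstacle is the arithmetic of the parameter translations $\eps\mapsto\eps_1\mapsto\eps_2$ together with the vertex-count losses $n\mapsto n_1\mapsto n_2\mapsto n_3$: the $4\eps$-loss from Zhang's step, the $o(1)$-loss from Lemma \ref{lemma:main}, and the $12\eps$-loss from the sparsity of $\overline{G_3}$ must together leave a clique exponent of at least $1-20\eps$, which is why the hypothesis $\eps<10^{-3}$ is hardwired in. All the deep content has already been packaged into the three lemmas being chained; the master lemma itself is essentially a calibration exercise.
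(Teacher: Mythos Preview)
Your approach is essentially identical to the paper's: chain Theorem~\ref{thm:zhang}, Lemma~\ref{lemma:main}, and Lemma~\ref{lemma:very_dense}, then extract a clique from the very dense $G_3$ via Tur\'an (equivalently Caro--Wei). The one slip is arithmetic: $(1+\eps)/(1-4\eps)=1+5\eps/(1-4\eps)>1+5\eps$, so your choice $\eps_1=5\eps$ does not quite verify $\surp^*(G_1)\leq n_1^{1+\eps_1}$; the paper takes $\eps_1=8\eps$ and $\alpha_1=1/20$, which works with room to spare and still lands at the clique exponent $1-20\eps$.
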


\begin{proof}
 Let $p=10^{-5}$. As $e(G)\geq n^{2-\eps}$ and $\surp(G)\leq \surp^*(G)\leq n^{1+\eps}$, Theorem \ref{thm:zhang} implies that there is an induced subgraph $G_1\subset G$ of edge density at least $1-p$ on $n_1$ vertices, where $n_1>n^{1-4\eps}$. Set $\eps_1=8\eps$ and $\alpha_1=1/20$. Then $\surp^*(G_1)\leq \surp^*(G)\leq n^{1+\eps}\leq n_1^{1+\eps_1}$. Moreover, the conditions of Lemma \ref{lemma:main} are satisfied for $\alpha_1,\eps_1$ and $G_1$, so we can find an induced subgraph $G_2\subset G_1$ of edge density at least $n_1^{-\alpha_1}$ on $n_2$ vertices, where $n_2=n_1^{1-o(1)}$. We have $\surp^*(G_2)\leq \surp^*(G_1)\leq n_2^{1+\eps_1+o(1)}$ and the edge density of $G_2$ is at least $1-n_2^{-\alpha_1}$. As $\eps_1<\alpha_1$, we can apply Lemma \ref{lemma:very_dense} to find an induced subgraph $G_3\subset G_2$ on at least $n_2/2$ vertices of edge density at least $1-O((\log n_2)^2n_2^{2\eps_1-1})$.
But then by Tur\'an's theorem, $G_3$ contains a clique of size at least 
$$\Omega(n_2^{1-2\eps_1}(\log n_2)^{-2})\geq n^{(1-4\eps)(1-16\eps)-o(1)}\geq n^{1-20\eps}.$$
\end{proof}

Finally, we are ready to prove Theorem \ref{thm:maxcut}.

\begin{proof}[Proof of Theorem \ref{thm:maxcut}]
We prove that if $\surp(G)\leq m^{1/2+\eps}$ for some $0<\eps<10^{-5}$, then $G$ contains a clique of size $m^{1/2-30\eps}$. Let $n$ be the number of vertices of $G$. We may assume that $G$ contains no isolated vertices. Then, a result of Erd\H{o}s, Gy\'arf\'as, and Kohayakawa \cite{EGyK} implies that $\surp(G)\geq n/6$. If $m\leq n^{2-3\eps}$, then $\surp(G)\geq n/6\geq \frac{1}{6}m^{1/(2-3\eps)}\geq m^{1/2+\eps}$, contradiction. Hence, $m\geq n^{2-3\eps}$ and $\surp(G)\leq m^{1/2+\eps}\leq n^{1+3\eps}$. But then $G$ contains a clique of size $n^{1-60\eps}>m^{1/2-30\eps}$ by Lemma \ref{lemma:master} applied with $3\eps$ instead of $\eps$.
\end{proof}

\section{Stability of graphs with small surplus}\label{sect:stability}

In this section, we prove Theorem \ref{thm:stability}. To aid the interested reader, we give a brief overview of the proof. Let $G$ be a graph on $n$ vertices with $\surp^*(G)\leq n^{1+\eps}$ for some small $\eps>0$. Then, we use Lemma \ref{lemma:master} to repeatedly pull out large cliques $C_1,\dots,C_I$. We show that we can repeat this procedure until the subgraph induced by $C_1\cup\dots \cup C_I$ contains all but $n^{2-\eps}$ edges of $G$, see Lemma \ref{lemma:many_cliques}. In the proof of this lemma, we employ a technical result showing that graphs with small surplus and large minimum degree cannot contain sparse parts, see Lemma \ref{lemma:inbalanced_partition}. 

Then, we show that the bipartite graph between $C_i$ and $C_j$ for every $1\leq i<j\leq I$ must be either close to complete or close to empty, this can be found in Lemma \ref{lemma:bipartite_complement}. Finally, we finish the proof by observing that we cannot have three different cliques, $C_i,C_j,C_k$, such that $G[C_i\cup C_j]$ and $G[C_j\cup C_k]$ is close to complete, but the bipartite graph between $C_i$ and $C_k$ is close to empty. This then easily implies that $G[C_1\cup\dots\cup C_I]$ is close to the disjoint union of cliques, and we are done.

We start with the following simple lemma, which will be used to argue that  a dense graph with small surplus cannot induce sparse subgraphs.

\begin{lemma}\label{lemma:inbalanced_partition}
Let $G$ be a graph on $n$ vertices. Let $X\cup Y$ be a partition of $V(G)$, and let $b=e(G[X,Y])$ and $c=e(G[Y])$. Then $\surp^*(G)\geq \frac{b^2}{4n^2}-c$.
\end{lemma}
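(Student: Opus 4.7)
The plan is to exhibit an explicit positive semidefinite test matrix with diagonal entries at most $1$ and evaluate it against $-A$ in the SDP definition of $\surp^*(G)$. A rank-one matrix $M = vv^T$ will suffice, where $v$ takes only two values, constant on each side of the partition. Specifically, I would set $v_i = t$ for $i \in X$ and $v_i = -s$ for $i \in Y$, where $s, t \in [0,1]$ are parameters to be optimized. Then $M$ is PSD and satisfies $M_{ii} = v_i^2 \leq 1$, so it is admissible.

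A direct computation, using that $A$ has zero diagonal and splitting the sum according to the partition, gives
\[
-\langle A, vv^T\rangle \;=\; -v^T A v \;=\; -2\!\!\sum_{ij \in E(G)}\!\! v_iv_j \;=\; 2ts\,b - 2t^2 e(G[X]) - 2s^2 c.
\]
Hence $\surp^*(G) \geq 2ts\,b - 2t^2 e(G[X]) - 2s^2 c$ for every valid pair $(s,t)$. To make the additive $-c$ appearing in the statement come out exactly, I would fix $s = 1/\sqrt{2}$, so that $2s^2 c = c$. The inequality becomes $\surp^*(G) \geq \sqrt{2}\,tb - 2t^2 e(G[X]) - c$, which I would then maximize over $t \in [0,1]$.

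The unconstrained maximizer is $t^* = \sqrt{2}\,b/(4\,e(G[X]))$. If $t^* \leq 1$, plugging in yields $\surp^*(G) \geq b^2/(4\,e(G[X])) - c$, and using the trivial estimate $e(G[X]) \leq \binom{|X|}{2} < n^2/2$ gives $\surp^*(G) \geq b^2/(2n^2) - c \geq b^2/(4n^2) - c$, as desired. If instead $t^* > 1$, i.e.\ $b > 2\sqrt{2}\,e(G[X])$, I would take $t = 1$, in which case the bound reads $\sqrt{2}\,b - 2e(G[X]) - c$. Using $2e(G[X]) < b/\sqrt{2}$ this is strictly larger than $b/\sqrt{2} - c$, and since $b \leq n^2$ we have $b/\sqrt{2} \geq b^2/(4n^2)$, so the conclusion follows in this boundary case as well.

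No substantive obstacle is expected, since this is a one-shot SDP test-matrix argument combined with elementary one-variable optimization. The only minor design choice is the scaling $s = 1/\sqrt{2}$, chosen precisely so that the coefficient of $c$ matches the statement; with the naive choice $s = 1$, one obtains the weaker bound $\surp^*(G) \geq b^2/n^2 - 2c$, which is insufficient when $c$ dominates.
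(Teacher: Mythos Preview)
Your proof is correct. The approach differs from the paper's, which is worth noting.

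The paper argues via actual cuts rather than SDP test matrices, and thereby proves the stronger inequality $\surp(G)\ge \frac{b^2}{4n^2}-c$. It splits into two cases according to whether $a=e(G[X])\le b/2$. If so, the deterministic cut $(X,Y)$ already has surplus at least $b/4-c/2\ge \frac{b^2}{4n^2}-c$. If not, each $x\in X$ is placed in $U$ independently with probability $\tfrac12+p$ where $p=b/(4a)$, and $Y$ is placed entirely on the other side; the expected surplus of $(U,(X\setminus U)\cup Y)$ is $bp-2ap^2-c/2=\frac{b^2}{8a}-\tfrac{c}{2}\ge\frac{b^2}{4n^2}-c$.

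Your rank-one test matrix $vv^T$ with $v\equiv t$ on $X$ and $v\equiv -1/\sqrt2$ on $Y$ is the natural SDP analogue of this biased cut, and the one-variable optimization over $t$ plays the role of choosing $p$. Your argument is clean and self-contained; the paper's buys the marginally stronger conclusion for $\surp(G)$ itself (not used later, since only $\surp^*$ is needed downstream). One tiny edge case you glossed over: when $e(G[X])=0$ the formula for $t^*$ is undefined, but then the linear function is maximized at $t=1$ and your second case applies verbatim.
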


\begin{proof}
If $a=e(G[X])$ satisfies $a\leq b/2$, then $\surp^*(G) \ge \surp(G)$ is at least $$e(G[X,Y])-\frac{e(G)}{2}=b-\frac{a+b+c}{2}= \frac{b-a-c}{2} \ge \frac{b}{4}-\frac{c}{2}\ge \frac{b^2}{4n^2}-c,$$ as desired.

Otherwise, we have $b < 2a$ and we can take $p=b/(4a) \in [0,1/2)$. Let $U$ be a random sample of $X$, where each vertex is sampled independently with probability $1/2+p$, and consider the cut $(U, (X\setminus U)\cup Y)$. Each edge in $G[X]$ has probability $1/2-2p^2$ of being cut, and each edge between $X$ and $Y$ is cut with probability $1/2+p$. Therefore, the expected size of this cut is $a(1/2-2p^2)+b(1/2+p)$, showing that the expected surplus is 
\[a\Big(\frac{1}{2}-2p^2\Big)+b\Big(\frac{1}{2}+p\Big)-\frac{a+b+c}{2}=bp-2ap^2-\frac{c}{2}=\frac{b^2}{8a}-\frac{c}{2} \ge \frac{b^2}{4n^2}-c,\]
where we have used that $a=e(G[X])\leq n^2/2$ in the last step.
\end{proof}

Next, we show that a graph with small surplus contains a collection of large cliques such that almost all edges are contained in the subgraph induced by the union of these cliques.

\begin{lemma}\label{lemma:many_cliques}
For every $\delta>0$ there exists $\eps>0$ such that the following holds. Let $G$ be a graph on $n$ vertices such that $\surp^*(G)\leq n^{1+\eps}$. Then there exists $X\subset V(G)$ such that the number of edges not in $G[X]$ is at most $n^{2-\eps}$, and $G[X]$ can be partitioned into cliques of size $n^{1-\delta}$. 
\end{lemma}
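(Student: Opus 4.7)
The plan is a greedy clique-extraction argument using repeated applications of Lemma \ref{lemma:master}, with the residual vertex set controlled via Lemma \ref{lemma:inbalanced_partition}. I would fix three auxiliary parameters $\tau,\eps_0,\eps$, all small multiples of $\delta$ with $\eps\ll \tau\ll \eps_0$; concretely, $\tau=\delta/100$, $\eps_0=\delta/25$, $\eps=\delta/1000$ would work, where without loss of generality we may assume $\delta<1/40$ (so that $\eps_0<10^{-3}$, as required by Lemma \ref{lemma:master}; the statement is monotone decreasing in $\delta$, so the small-$\delta$ case is the strongest).

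The greedy process is as follows. Set $G_0=G$. At step $i$, let $n_i=|V(G_i)|$. If $n_i\geq n^{1-\tau}$ and $e(G_i)\geq n_i^{2-\eps_0}$, then
$$\surp^*(G_i)\leq \surp^*(G)\leq n^{1+\eps}\leq n_i^{1+\eps_0},$$
where the final inequality follows from the choice $(1+\eps_0)(1-\tau)\geq 1+\eps$. Hence Lemma \ref{lemma:master} applied to $G_i$ with parameter $\eps_0$ produces a clique of size at least $n_i^{1-20\eps_0}\geq n^{(1-\tau)(1-20\eps_0)}\geq n^{1-\delta}$. Choose $C_{i+1}$ to be a sub-clique of exactly $n^{1-\delta}$ vertices, set $G_{i+1}=G_i-V(C_{i+1})$, and iterate. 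The process stops at some step $I$ when either $n_I<n^{1-\tau}$ or $e(G_I)<n_I^{2-\eps_0}$.

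Set $X=\bigcup_{j=1}^I V(C_j)$ and $Y=V(G)\setminus X$. By construction, $G[X]$ is partitioned into cliques of size $n^{1-\delta}$, as required. It remains to bound the number of edges with at least one endpoint in $Y$, i.e., $b+c$ where $b=e(G[X,Y])$ and $c=e(G[Y])$. If we stopped because $n_I<n^{1-\tau}$, then trivially $b+c\leq n\cdot n_I\leq n^{2-\tau}\leq n^{2-\eps}$. Otherwise $c=e(G_I)<n_I^{2-\eps_0}\leq n^{2-\eps_0}$, and Lemma \ref{lemma:inbalanced_partition} applied to the partition $(X,Y)$ gives
$$b^2\leq 4n^2\bigl(\surp^*(G)+c\bigr)\leq 4n^2\bigl(n^{1+\eps}+n^{2-\eps_0}\bigr),$$
so $b=O\bigl(n^{3/2+\eps/2}+n^{2-\eps_0/2}\bigr)$. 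For $\eps$ sufficiently small compared to $\eps_0$, both summands of the bound on $b$, as well as $c$, are at most $n^{2-\eps}/3$, so $b+c\leq n^{2-\eps}$ as desired.

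The main obstacle is the simultaneous calibration of $\tau,\eps_0,\eps$: we need (i) $(1+\eps_0)(1-\tau)\geq 1+\eps$ so that the surplus hypothesis of Lemma \ref{lemma:master} survives throughout the iteration, (ii) $(1-\tau)(1-20\eps_0)\geq 1-\delta$ so that the cliques produced at every step have size at least $n^{1-\delta}$, and (iii) $\eps_0/2>\eps$ so that the second summand in the bound on $b$ from Lemma \ref{lemma:inbalanced_partition} actually beats $n^{2-\eps}$. All three constraints are easily met by the choices suggested above.
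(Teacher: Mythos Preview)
Your proof is correct and uses the same two core ingredients as the paper---greedy clique extraction via Lemma~\ref{lemma:master} together with Lemma~\ref{lemma:inbalanced_partition} to control the residual---but the implementation differs in a way worth noting. The paper first deletes all vertices of degree below $n^{1-2\eps}$ to obtain a graph $G_0$ of large minimum degree, then greedily extracts cliques, and finally uses the minimum-degree condition together with Lemma~\ref{lemma:inbalanced_partition} to argue that the residual set $Y$ must be \emph{dense} whenever it is large; this forces $G_I$ to contain another clique by Lemma~\ref{lemma:master}, a contradiction, so $|Y|<n^{1-2\eps}$ and the edge bound follows from the smallness of $Y$. You instead skip the preprocessing entirely, stop the iteration when either $|Y|$ is small or $G[Y]$ is sparse, and in the latter case apply Lemma~\ref{lemma:inbalanced_partition} in the other direction, reading it as $b^2\leq 4n^2(\surp^*(G)+c)$ to bound the crossing edges directly. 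Your variant is somewhat cleaner---it avoids both the minimum-degree step and the final contradiction argument---while the paper's version has the minor advantage of actually forcing $|Y|$ to be small rather than merely sparse.
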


\begin{proof}
We may assume $\delta<10^{-3}$. Define $\delta_0=\delta/2$ and $\eps_0=\delta_0/20$. By Lemma \ref{lemma:master}, every $n_0$-vertex graph with at least $n_0^{1-\eps_0}$ edges and surplus at most $n_0^{1+\eps_0}$ contains a clique of size $n_0^{1-\delta_0}$. 

We now prove the statement with $\eps=\eps_0/10$ suffices. Delete all vertices of $G$ of degree less than $d=n^{1-2\eps}$, and let $G_0$ be the resulting graph. Note that we removed at most $dn$ edges. Repeat the following procedure. If $G_i$ is defined and $G_i$ contains a clique of size $n^{1-\delta}$, then let $C_{i+1}$ be such a clique, and set $G_{i+1}=G_i\setminus C_{i+1}$. Otherwise, stop, and let $I$ be the last index $i$ for which $G_i$ is defined. We show that $X=C_1\cup\dots\cup C_I$ suffices. It is clear that $X$ can be partitioned into cliques of size $n^{1-\delta}$, so it remains to show that the number of edge not in $X$ is at most $n^{2-\eps}$. 

Let $Y=V(G_I)$, $b=e(G_0[X,Y])$ and $c=e(G_0[Y])$.
\begin{claim}
    If $|Y|\geq n^{2/3}$, then $c\geq d^2|Y|^2/(20n^2)$.
\end{claim}
\begin{proof}
    As $G$ has minimum degree $d$, we have $b+2c\geq d|Y|$. 
    Assume that $c<d^2|Y|^2/(20n^2)<d|Y|/4$, then $b\geq d|Y|/2$. 
    By Lemma \ref{lemma:inbalanced_partition}, we have
    $$
        n^{1+\eps}\geq \surp^*(G)\geq \surp^*(G_0)\geq \frac{b^2}{4n^2}-c,
    $$
    from which 
    $$c\geq \frac{b^2}{4n^2}-n^{1+\eps}\geq \frac{d^2|Y|^2}{16n^2}-n^{1+\eps}\geq \frac{d^2|Y|^2}{20n^2}.$$
    In the last inequality we used that $d>n^{3/4}$ and $|Y|\geq n^{2/3}$.
\end{proof}
Hence, if $|Y|\geq n^{1-2\eps}$, then $G_I$ has at least $d^2|Y|^2/(20n^2)>|Y|^2n^{-5\eps}\geq |Y|^{2-\eps_0}$ edges and $\surp^*(G_I)\leq \surp^*(G)\leq n^{1+\eps}\leq |Y|^{1+4\eps}\leq |Y|^{1+\eps_0}$. But then $Y$ contains a clique of size $|Y|^{1-\delta_0}>n^{1-2\eps-\delta_0}>n^{1-\delta}$, contradicting that $G_I$ contains no clique of size $n^{1-\delta}$. Therefore, we must have $|Y|\leq n^{1-2\eps}$.

From this, the number of edges of $G$ not in $G[X]$ is at most 
$$dn+e(G[X,Y])+e(G[Y])\leq dn+|Y|n\leq n^{1-\eps}.$$
This finishes the proof.
\end{proof}

Our next goal is to show that if $H$ is a bipartite graph, then the complement of $H$ has large surplus, unless $H$ is close to the empty or complete bipartite graph. We prepare the proof of this with two lemmas.

A \emph{Boolean matrix} is a matrix with only zero and one entries. We show that if a Boolean  matrix is approximated by a rank one matrix, then it is also approximated by a rank one Boolean matrix, or equivalently, a combinatorial rectangle.

\begin{lemma}\label{lemma:rank1_approximation}
Let $A$ be an $n\times n$ Boolean matrix, and let $\delta\geq 0$. If there exist $u,v\in \mathbb{R}^n$ such that $\|A-uv^T\|_F^2\leq \delta n^2$, then there exist $x,y\in \{0,1\}^n$  such that $\|A-xy^T\|_F^2\leq O(\delta^{1/3}n^2)$.
\end{lemma}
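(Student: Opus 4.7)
The plan is to obtain $x, y \in \{0,1\}^n$ by thresholding the entries of $u$ and $v$ respectively. First, I would preprocess: since $A$ has nonnegative entries, by Perron--Frobenius the best rank-1 Frobenius approximation of $A$ can be written as $\sigma_1 \hat{u}_1 \hat{v}_1^T$ with $\hat{u}_1, \hat{v}_1 \geq 0$, and replacing $(u,v)$ by these scaled top singular vectors only decreases the Frobenius error. Also, the transformation $u \mapsto cu$, $v \mapsto v/c$ preserves $uv^T$, so I may additionally arrange $\|u\|_\infty = \|v\|_\infty =: M$.

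Two degenerate cases can be disposed of immediately. If $M^2 \leq 1/2$, then every product $u_iv_j \leq 1/2$, so each $A_{ij} = 1$ contributes at least $1/4$ to $\delta n^2$; hence $\|A\|_F^2 \leq 4\delta n^2$ and $x = y = 0$ suffices. If $\delta$ is at least a small absolute constant, then $\delta^{1/3} n^2 \geq n^2 \geq \|A\|_F^2$ and the trivial $x = y = 0$ again works. Otherwise, examining the max entry $u_{i^*}v_{j^*} = M^2$ and using $(A_{i^*j^*} - M^2)^2 \leq \delta n^2$ with $A_{i^*j^*} \in \{0,1\}$ gives $M^2 \leq 1 + O(\sqrt\delta\, n)$; I would then reduce to the normalized case $u, v \in [0,1]^n$ by absorbing absolute constants into the conclusion.

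Now set $\alpha = \delta^{1/6}$ and let $X = \{i : u_i \geq 1-\alpha\}$, $Y = \{j : v_j \geq 1-\alpha\}$, $x = \mathbf{1}_X$, $y = \mathbf{1}_Y$. Partition $[n] \times [n]$ into nine regions according to whether $u_i$ and $v_j$ each lie in $[0,\alpha]$ (Small), $(\alpha, 1-\alpha)$ (Middle), or $[1-\alpha, 1]$ (Big), and count the disagreements between $A$ and $xy^T$ in each region using $\sum (A_{ij}-u_iv_j)^2 \leq \delta n^2$. In every \emph{extremal} region (one coordinate Small, or both Big), each disagreement forces a per-entry error of at least $1/4$ (for instance, $u_iv_j \leq \alpha \leq 1/2$ with $A_{ij}=1$ gives error $\geq (1-\alpha)^2 \geq 1/4$, and $u_iv_j \geq (1-\alpha)^2$ with $A_{ij}=0$ gives error $\geq 1/4$), yielding only $O(\delta n^2)$ mismatches in total. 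In each of the three \emph{middle} regions (Middle-Middle, Middle-Big, Big-Middle) one has $u_iv_j \leq 1-\alpha$ while $(xy^T)_{ij} = 0$, so every disagreement has $A_{ij}=1$ and contributes at least $\alpha^2$ to the error, giving at most $\delta n^2/\alpha^2$ mismatches per region. Summing over all nine regions gives $\|A - xy^T\|_F^2 \leq O(\delta n^2/\alpha^2) = O(\delta^{2/3} n^2) \leq O(\delta^{1/3} n^2)$.

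The step I expect to be the main obstacle is the normalization reduction to $u, v \in [0,1]^n$. For $\delta \ll 1/n^2$ the single-entry bound already pins $M$ close to $1$, but for moderately large $\delta$ the bound $M^2 \leq 1 + O(\sqrt\delta\, n)$ is weaker and requires a more careful case analysis --- either by scaling the thresholds to $M(1-\alpha)$ and redoing the nine-region count with the slightly altered products $u_iv_j \in [0, M^2]$, or by observing that in this intermediate regime the trivial bound $x=y=0$ already yields an error at most $n^2 \leq O(\delta^{1/3}n^2)$.
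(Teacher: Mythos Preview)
The thresholding idea is sound, but the reduction to $u,v\in[0,1]^n$ is a genuine gap that your suggested fixes do not close. The single-entry bound only gives $M^2\le 1+\sqrt{\delta}\,n$, which is vacuous throughout the range $n^{-2}\ll\delta\ll 1$; and $M$ really can be large there. For the symmetric ``L-matrix'' with $A_{1j}=A_{i1}=1$ and all other entries $0$, the top eigenvector yields $u=v$ with $u_1\approx n^{1/4}/\sqrt{2}$ and $u_i\approx n^{-1/4}/\sqrt{2}$ for $i>1$, so $M\approx n^{1/4}$ while $\delta\approx 1/n$. Your Fix~2 only handles $\delta=\Omega(1)$. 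Your Fix~1 (rescaling the thresholds to $M\alpha$ and $M(1-\alpha)$) destroys the per-entry error bounds: in a ``one-coordinate-Small'' cell the only upper bound on the product is $u_iv_j\le M^2\alpha$, and once $M^2\alpha>1$ (as here, where $M^2\alpha\approx n^{1/3}$) a mismatch with $A_{ij}=1$ can have $u_iv_j$ arbitrarily close to $1$ and hence contribute negligible error. The same failure occurs in the Middle cells, where you would need $u_iv_j\le 1-\alpha$ but only have $u_iv_j\le M^2(1-\alpha)$. So the nine-region count no longer controls the number of mismatches.

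The paper sidesteps this entirely by normalizing in $\ell_2$ rather than $\ell_\infty$: from $\|uv^T\|_F\le\|A\|_F+\sqrt{\delta}\,n\le 2n$ one may take $\|u\|_2=\|v\|_2\le\sqrt{2n}$, which needs no further justification. It then thresholds \emph{low}, setting $x_i=1$ iff $u_i\ge\alpha$ with $\alpha=\delta^{1/6}$ (not $1-\alpha$). In the key case $u_i<\alpha$, $A_{ij}=1$, one splits on $v_j$: if $v_j\le 1/(2\alpha)$ then $u_iv_j<1/2$ and the error is at least $1/4$; if $v_j>1/(2\alpha)$ then the $\ell_2$ bound limits the number of such $j$ to at most $8\alpha^2 n$, giving at most $O(\alpha^2 n^2)=O(\delta^{1/3}n^2)$ such pairs. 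The remaining case ($A_{ij}=0$, $x_i=y_j=1$) gives $u_iv_j\ge\alpha^2$, error $\ge\alpha^4$, hence at most $\delta n^2/\alpha^4=\delta^{1/3}n^2$ mismatches. The $\ell_2$ normalization is precisely what bounds the count of large coordinates of $v$; this is the idea your high-threshold approach is missing.
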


\begin{proof}
 Without loss of generality, we may assume that $\delta\leq 1$. Furthermore, we may assume that $u$ and $v$ has nonnegative entries, as replacing every entry with the absolute value does not increase $\|A-uv^T\|_F^2$. Observe that $\|u\|_2^2\|v\|_2^2=\|uv^T\|_F^2$, which shows that $$\|u\|_2\|v\|_2\leq \|A\|_F+\sqrt{\delta}n\leq 2n.$$
We may rescale $u$ and $v$ such that $\|u\|_2=\|v\|_2\leq \sqrt{2n}$. Let $\alpha=\delta^{1/6}$, and define $x,y\in \{0,1\}^n$ such that 
 $$x(i)=\begin{cases} 
 1 &\mbox{ if } u(i)\geq \alpha \\
 0 &\mbox{ otherwise,}\end{cases}$$
 and similarly
  $$y(i)=\begin{cases} 
 1 &\mbox{ if } v(i)\geq \alpha \\
 0 &\mbox{ otherwise.}\end{cases}$$
 We show that $xy^T$ is a good approximation of $A$. Note that $\|A-xy^T\|_F^2$ is the number of pairs $(i,j)$ such that $A_{i,j}\neq x_iy_j$. We count these pairs in three cases.
 \begin{description}
     \item[Case 1.] $A_{i,j}=1$ and $x_i=0$.

     In this case, we have $u_i< \alpha$. If $v_j\leq 1/(2\alpha)$, then $(A_{i,j}-u_iv_j)^2>1/4$, so there are at most $4\delta n^2$ such pairs $(i,j)$. On the other hand, the number of $j$ such that $v_j\geq 1/(2\alpha)$ is at most $4\alpha^2 n$, as $\|v\|_2^2=\sum_{j=1}^nv_j^2\leq 2n$. Therefore, the number of $(i,j)$ such that $A_{i,j}=1$ and $x_i=0$ is at most $$4\delta n^2+4\alpha^2n^2=4\delta n^2+4\delta^{1/3}n^2=O(\delta^{1/3}n^2).$$

    \item[Case 2.] $A_{i,j}=1$ and $y_j=0$.
    
    This is symmetric to the previous case, so the number of such pairs is also at most $O(\delta^{1/3}n^2)$.
     
     \item[Case 3.] $A_{i,j}=0$ and $x_i=y_j=1$.

     In this case, $u_i\geq \alpha$ and $v_j\geq \alpha$, so $(A_{i,j}-u_iv_j)^2\geq \alpha^4$. Thus, the total number of pairs $(i,j)$ in this case is at most $\delta n^2/\alpha^4=\delta^{1/3}n^2$.\qedhere
 \end{description}
\end{proof}

Next, we prove a simple technical lemma which shows that the union of two cliques has large surplus as long as the two cliques are not too disjoint, and do not overlap too much. 

\begin{lemma}\label{lemma:union_of_cliques}
Let $G$ be a graph such that $V(G)=C_1\cup C_2$ and $E(G)=\binom{C_1}{2}\cup \binom{C_2}{2}$. Let $|C_1\setminus C_2|=a$, $|C_2\setminus C_1|=b$ and $|C_1\cap C_2|=c$. Then $$\surp(G)\geq \frac{1}{4}\min\{a^2,b^2,c^2\}.$$ 
\end{lemma}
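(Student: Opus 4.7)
The plan is to construct an explicit cut and evaluate its surplus via a clean clique-decomposition identity. Write $A = C_1\setminus C_2$, $B = C_2\setminus C_1$, $D = C_1\cap C_2$. For a partition $V(G) = X\sqcup Y$, introduce the three imbalances
\[ s_1 = |X\cap C_1| - \tfrac{a+c}{2}, \qquad s_2 = |X\cap C_2| - \tfrac{b+c}{2}, \qquad s_D = |X\cap D| - \tfrac{c}{2}. \]
The basic observation is that for any clique $K_n$ cut with imbalance $s$, the cut has $n^2/4 - s^2$ edges out of a total of $\binom{n}{2}$, so it contributes exactly $n/4 - s^2$ to the surplus. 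Since $E(G) = \binom{C_1}{2}\cup \binom{C_2}{2}$ with intersection $\binom{D}{2}$, edge-wise inclusion-exclusion gives the key identity
\[ \surp(G;X,Y) \;=\; \frac{a+b+c}{4} \;-\; s_1^2 \;-\; s_2^2 \;+\; s_D^2. \]
The first step of the proof is to establish this identity.

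The second step is to choose integer values $|X\cap A|\in[0,a]$, $|X\cap B|\in[0,b]$, $|X\cap D|\in[0,c]$ that force $|s_1|,|s_2|\leq 1/2$ while making $|s_D|$ attain essentially $\min\{a,c\}/2$. By symmetry, I assume $a\leq b$, so $\min\{a,b,c\} = \min\{a,c\}$. If $c\leq a$, I place all of $D$ on one side and split $A,B$ as evenly as possible, taking $|X\cap D|=0$, $|X\cap A|=\lfloor(a+c)/2\rfloor$, $|X\cap B|=\lfloor(b+c)/2\rfloor$; feasibility follows from $c\leq a\leq b$, and this yields $s_D=-c/2$ with $s_1^2,s_2^2\leq 1/4$. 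If instead $c>a$, I put all of $A$ on one side and use $D$ to re-balance $C_1$, taking $|X\cap A|=0$, $|X\cap D|=\lceil(a+c)/2\rceil$, and $|X\cap B|=\lceil(b+c)/2\rceil-\lceil(a+c)/2\rceil\in[0,b]$; feasibility uses $a\leq b$ and $a<c$, and this yields $|s_D|\geq a/2$ with $s_1^2,s_2^2\leq 1/4$.

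Substituting into the identity, both cases deliver $\surp(G)\geq (a+b+c)/4 - 1/2 + \min\{a,c\}^2/4$, which exceeds $\min\{a,b,c\}^2/4$ as soon as $a+b+c\geq 2$; the remaining situations in which one of $a,b,c$ vanishes make the bound trivial since $\surp(G)\geq 0$. The main obstacle is locating the clean identity above; once it is in hand the rest is a short optimization over three integer parameters, with only mild care needed to verify feasibility and absorb the integer rounding loss into the $(a+b+c)/4$ slack.
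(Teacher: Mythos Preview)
Your argument is correct. The key identity $\surp(G;X,Y)=\frac{a+b+c}{4}-s_1^2-s_2^2+s_D^2$ follows exactly as you say from inclusion--exclusion on the edge sets $\binom{C_1}{2}$, $\binom{C_2}{2}$, $\binom{D}{2}$, together with the per-clique computation $n/4-s^2$. Your feasibility checks in both cases are sound (in particular, $\lceil(a+c)/2\rceil\le c$ uses $a<c$, and $0\le \lceil(b+c)/2\rceil-\lceil(a+c)/2\rceil\le b$ uses $a\le b$), and the final slack $(a+b+c)/4\ge 1/2$ is available whenever $\min\{a,b,c\}\ge 1$, with the complementary case being trivial.

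The paper takes a slightly different route. Instead of your inclusion--exclusion identity, it first uses monotonicity of the surplus under passing to induced subgraphs to delete vertices from the larger of $A,B$ and reduce to the symmetric case $a=b$; it then writes down two explicit cuts (one placing half of $A$ and half of $B$ on one side when $c\le a$, the other placing half of $D$ on one side when $c\ge a$) and bounds the surplus via a direct edge count, without tracking integer rounding. Your formulation via the imbalance identity is arguably cleaner: it avoids the preliminary reduction step, makes the dependence on the three parameters transparent, and handles parity issues explicitly by absorbing the $O(1)$ rounding loss into the $(a+b+c)/4$ term. The paper's version is a touch shorter but correspondingly less careful about integrality.
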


\begin{proof}
Let $A=C_1\setminus C_2$, $B=C_2\setminus C_1$, and $C=C_1\cap C_2$. We may assume that $a=b$. Otherwise, if, say $a\leq b$, we remove vertices of $B\setminus C$ until its size is exactly $a$. Then it is enough to show that the resulting graph has surplus at least $\frac{1}{4}\min\{a^2,c^2\}$.

The number of edges of $G$ is 
$$2\binom{a+c}{2}-\binom{c}{2}< a^2+2ac+\frac{c^2}{2}.$$ 
If $c\leq a$, then define the cut $(U,V)$ such that $U$ is some $(a+c)/2$ element subset of $A$ together with some $(a+c)/2$ element subset of $B$. The number of edges in this cut is $(a+c)^2/2$. Hence, the surplus of $G$ is at least $\frac{1}{2}(a+c)^2-\frac{1}{2}e(G)=c^2/4$.

If $c\geq a$, then define the cut $(U,V)$ such that $U$ is some $(a+c)/2$ element subset of $C$. Then the number of edges in this cut is $\frac{a+c}{2}\cdot \frac{c-a}{2}+2\frac{a+c}{2}\cdot a=\frac{c^2}{4}+\frac{3}{4}a^2+ac$. Therefore, the surplus is at least $a^2/4$.
\end{proof}

Now we are ready to prove our lemma about the surplus of the complement of bipartite graphs.

\begin{lemma}\label{lemma:bipartite_complement}
Let $\eps,\delta>0$ be parameters such that $\eps+6\delta<1/2$. Let $H$ be a bipartite graph with vertex classes of size $n$, and let $G=\overline{H}$. If  $\surp^*(G)\leq n^{1+\eps}$, then either $e(H)\leq n^{2-\delta}$ or $e(H)\geq n^2-n^{2-\delta}$.
\end{lemma}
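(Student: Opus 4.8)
The plan is to prove the contrapositive: assuming $n^{2-\delta} < e(H) < n^2 - n^{2-\delta}$, I will exhibit a positive-semidefinite test matrix $X$ (with diagonal at most $1$) for which $-\langle A_G, X\rangle > n^{1+\eps}$, where $A_G$ is the adjacency matrix of $G = \overline{H}$. The natural approach is spectral: write $A_H = J' - A_G$ on the bipartite structure, and note that a bipartite graph $H$ with parts $U,W$ of size $n$ has adjacency matrix of the block form $\begin{pmatrix} 0 & M \\ M^T & 0\end{pmatrix}$, so its nonzero eigenvalues come in pairs $\pm \sigma_i$ where $\sigma_i$ are the singular values of $M$. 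Since $G = \overline{H}$, the negative eigenvalues of $G$ will be controlled by how far $H$ is from a complete bipartite graph or empty graph, i.e.\ by how far $M$ is from a rank-one $0/1$ matrix (a combinatorial rectangle).

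First I would reduce to the case where $M$ is far (in squared Frobenius distance $\gtrsim n^{2-\delta}$) from \emph{every} $0/1$ rank-one matrix $xy^T$. By Lemma~\ref{lemma:rank1_approximation}, if $M$ were within $O(\delta' n^2)$ of \emph{any} real rank-one matrix $uv^T$, then it would be within $O((\delta')^{1/3}n^2)$ of a Boolean rectangle; so being far from all Boolean rectangles forces $M$ to be far from all real rank-one matrices, which by the Eckart–Young theorem means $\sigma_1(M)^2 + \sigma_2(M)^2 + \cdots$ has substantial mass \emph{beyond the top singular value}: precisely $\sum_{i\geq 2}\sigma_i^2 = \|M - \sigma_1 u_1 v_1^T\|_F^2 \gtrsim n^{2-O(\delta)}$. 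Actually I need this statement for the closest rectangle to be either all-zeros or all-ones, which is exactly what the two-sided hypothesis $n^{2-\delta} < e(H) < n^2 - n^{2-\delta}$ guarantees after translating between the complement and $H$ itself; some care is needed because the relevant rank-one approximations of $A_G$ correspond to the "all-ones minus rectangle" picture, but this is bookkeeping.

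Next, I would translate this singular-value mass into negative eigenvalue mass of $G$. Each pair of nonzero singular values $\sigma_i \geq \sigma_2$ of $M$, after passing to the complement, contributes eigenvalues of $G$ that are (up to lower-order corrections from the $J$ and $\pm\sigma_1$ pieces) roughly $\mp\sigma_i$, so in particular $\sum_{\lambda_j < 0}\lambda_j^2 \gtrsim \sum_{i\geq 2}\sigma_i^2 - (\text{top term correction}) \gtrsim n^{2-O(\delta)}$. The top correction is the delicate point: subtracting off $\sigma_1 u_1 v_1^T$ and the all-ones structure can shift eigenvalues, so I'd want to argue via an interlacing or rank-$O(1)$ perturbation bound (Weyl's inequality, as already used in Lemma~\ref{lemma:weyl}) that removing $O(1)$ rank-one pieces changes $\sum_{\lambda_j<0}\lambda_j^2$ by at most $O(1)$ times the largest squared eigenvalue removed, which is $O(n^2)$ — but that's the \emph{same} order, so instead I should work with $\sum_{\lambda_j<0}|\lambda_j|$ (the $\ell_1$ quantity), which is stable under rank-$O(1)$ perturbations up to an additive $O(\sqrt{n}\cdot \sigma_1) = O(n^{3/2})$ error by Cauchy–Schwarz on the remaining spectrum — and $n^{3/2}$ is below the $n^{2-\delta}$ we are chasing as long as $\delta$ is small. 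Then Lemma~\ref{lemma:surp_star}(i) gives $\surp^*(G) \geq \sum_{\lambda_j<0}|\lambda_j|$, and I'd combine with $\sum|\lambda_j| \geq (\sum\lambda_j^2)/|\lambda_{\min}| \geq n^{2-O(\delta)}/O(n)$ to get $\surp^*(G) \geq n^{1-O(\delta)} \cdot n^{?}$ — here I need to be careful the exponent beats $n^{1+\eps}$; since $\eps + 6\delta < 1/2$ the slack in the hypothesis is exactly designed so that $n^{1-O(\delta) + 1/2 - O(\delta)}$-type bounds dominate $n^{1+\eps}$.

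The main obstacle, as flagged, is controlling the effect of the top singular value / rank-one perturbations: naively the "signal" $\sum_{i\geq 2}\sigma_i^2 \approx n^{2-\delta}$ is swamped by $\sigma_1^2 \approx n^2$ when one tries to read off eigenvalues of $G$ directly, so the argument must isolate the \emph{subdominant} spectrum cleanly. I expect the cleanest route is: (1) let $M'$ be $M$ with its best Boolean rectangle (which is $0$ or $J$) subtracted, so $M'$ is a $0/\pm1$ matrix with $\|M'\|_F^2 = e(H)$ or $n^2 - e(H) \geq n^{2-\delta}$ and, crucially, \emph{also} far from all real rank-one matrices by the Lemma~\ref{lemma:rank1_approximation} argument; (2) then $\sigma_2(M') \cdot$(number of large singular values)$\,\gtrsim n^{1-O(\delta)}$ in an appropriate averaged sense, and one can pick out a block of $G$'s spectrum on the orthogonal complement of $O(1)$ vectors where all eigenvalues are genuinely negative and bounded away from $0$ by $\gtrsim n^{1/2 - O(\delta)}$; (3) apply Cauchy's interlacing / the variational characterization to conclude $\sum_{\lambda_j<0}|\lambda_j| \gtrsim n^{3/2 - O(\delta)} > n^{1+\eps}$. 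If the constant-rank corrections prove too lossy for $\ell_1$, the fallback is to invoke Lemma~\ref{lemma:surp_star}(ii) with the balanced-subgraph reduction (Lemma~\ref{lemma:balanced}) to upgrade to an $\ell_2$-type bound on a balanced piece, mirroring the strategy of Lemma~\ref{lemma:very_dense}.
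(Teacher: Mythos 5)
There is a genuine gap, and it is exactly at the point you wave off as ``bookkeeping.'' You claim that the two-sided hypothesis $n^{2-\delta}<e(H)<n^2-n^{2-\delta}$ guarantees that the closest Boolean rectangle to $M$ is either the all-zeros or the all-ones matrix. This is false: the hypothesis only rules out closeness to those two \emph{particular} rectangles. Take $H$ to be the complete bipartite graph between half of $U$ and half of $W$. Then $e(H)=n^2/4$ lies strictly between the two thresholds, yet $M$ is \emph{exactly} a rank-one Boolean matrix, so $\sum_{i\geq 2}\sigma_i^2=0$ and every quantity your argument uses as ``signal'' vanishes. The lemma is still true in this case --- $\surp^*(G)$ is in fact $\Omega(n^2)$ --- but the large surplus comes from a completely different mechanism: the top singular vector of $M$ is far from constant, so the interaction of $\sigma_1u_1v_1^T$ with the all-ones matrix creates a large negative eigenvalue of $G$. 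This contribution is invisible both to your subdominant-spectrum argument and to the Weyl transfer of Lemma~\ref{lemma:weyl} (which drops $\mu_1$). The paper handles precisely this case by a separate combinatorial step, Lemma~\ref{lemma:union_of_cliques}: when $H$ is close to a nontrivial complete bipartite graph, $G$ is close to a union of two overlapping cliques, and an explicit cut gives surplus $\Omega(n^{2-2\delta})$. Your proposal, which only ever extracts surplus from eigenvalues orthogonal to the top rank-one piece, cannot close this case; consequently your ``cleanest route'' step (1), subtracting ``the best Boolean rectangle (which is $0$ or $J$),'' starts from a false premise.

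Two smaller points. First, the paper's actual use of Lemma~\ref{lemma:rank1_approximation} is in the forward direction: small surplus forces $\sum_{i\neq 1,2n}\mu_i^2$ to be small via Lemma~\ref{lemma:surp_star}(ii) and Lemma~\ref{lemma:weyl}, hence $M$ is close to \emph{some} rank-one matrix, hence to some rectangle; the proof then splits according to whether that rectangle is trivial or not. Your contrapositive framing is workable only if you add the nontrivial-rectangle branch. Second, your $\ell_1$ fallback $\sum_{\lambda_j<0}|\lambda_j|\geq(\sum_j\lambda_j^2)/|\lambda_{\min}|\gtrsim n^{2-O(\delta)}/n=n^{1-O(\delta)}$ does not beat $n^{1+\eps}$; the extra factor of $n^{1/2}$ genuinely requires the weighted bound of Lemma~\ref{lemma:surp_star}(ii) with $\Delta(\overline{G})\leq n$, which is what the paper uses from the outset rather than as a fallback.
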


\begin{proof}
Let $A$ be the adjacency matrix of $G$ with eigenvalues $\lambda_1\geq \dots\geq \lambda_{2n}$. Furthermore, let $M$ be the adjacency matrix of $H$, and let $\mu_1\geq \dots\geq \mu_{2n}$ be the eigenvalues of $M$. As $H$ is bipartite, we have $\mu_i=-\mu_{2n+1-i}$ for $i\in [2n]$. 
By Lemma \ref{lemma:surp_star} \textit{(ii)} and Lemma \ref{lemma:weyl}, we have
$$\surp^*(G)\geq \Omega\left(\frac{1}{\sqrt{n}}\sum_{\lambda_i<0}\lambda_i^2\right)\geq \Omega\left(\frac{1}{\sqrt{n}}\sum_{i\neq 1,\mu_i>0}\mu_i^2\right)=\Omega\left(\frac{1}{\sqrt{n}}\sum_{i\neq 1,2n}\mu_i^2\right).$$
Hence, if $\surp^*(G)\leq n^{1+\eps}$, then we have $\sum_{i\neq 1, 2n}\mu_i^2=O(n^{3/2+\eps})$.

On the other hand, we can express $\sum_{i\neq 1, 2n}\mu_i^2$ as follows. The matrix $M$ has the form $M=\begin{pmatrix}0 & B \\ B^T & 0\end{pmatrix}$ with an appropriate $n\times n$ matrix $B$. Let $v_1$ be the principal eigenvector  of $M$, then we can write $v_1=(u,v)$, where $u,v\in\mathbb{R}^n$ correspond to the two vertex classes of $H$. Then the eigenvector corresponding to the smallest eigenvalue $\lambda_{2n}=-\lambda_1$ is $v_{2n}=(u,-v)$, and we have
\begin{align*}
\sum_{i\neq 1,2n}\mu_i^2&=\|M-\lambda_1v_1v_1^T-\lambda_{2n}v_{2n}v_{2n}^T\|_F^2\\
&=\left\|\begin{pmatrix} 0 & B \\ B^T & 0\end{pmatrix}-\lambda_1\begin{pmatrix} uu^T & uv^T \\ vu^T & vv^T\end{pmatrix}+\lambda_1\begin{pmatrix} uu^T & -uv^T \\ -vu^T & vv^T\end{pmatrix}\right\|_F^2=2\|B-2\lambda_1uv^T\|_F^2.
\end{align*}
Therefore, $\|B-2\lambda_1vu^T\|_F=O(n^{3/2+\eps})$. But then by Lemma \ref{lemma:rank1_approximation}, there exist $x,y\in \{0,1\}^n$ such that $\|B-xy^T\|_F^2=O(n^{5/6+\eps/3})$. The matrix $xy^T$ corresponds to a complete bipartite graph between the vertex classes of $H$, let $\widetilde{H}$ denote this complete bipartite graph, and let $X_0$ and $Y_0$ denote its vertex classes. Note that $e(\widetilde{H})=\|xy^T\|_F^2=|X_0||Y_0|$, and $\|B-xy^T\|_F^2$ is the number of edges $\widetilde{H}$ differs from $H$.  Therefore, if $e(\widetilde{H})\leq n^{2-\delta}/2$, then $e(H)\leq e(\widetilde{H})+\|B-xy^T\|_F\leq n^{2-\delta}$, so we are done. We can proceed similarly if $e(\widetilde{H})\geq n^2-n^{2-\delta}/2$. Hence, we may assume that $n^{2-\delta}/2\leq e(\widetilde{H})\leq n^2-n^{2-\delta}/2$. We show that this is impossible, by deriving that the surplus of $G$ is too large in this case.

Let $\widetilde{G}$ be the complement of $\widetilde{H}$. Then $\widetilde{G}$ and $G$ differ by at most $O(n^{5/6+\eps/3})$ edges. On the other hand, $\widetilde{G}$ is the union of two cliques, having vertex sets $C_1$ and $C_2$, where $X_0=C_1\setminus C_2$, $Y_0=C_2\setminus C_1$, and $C_1\cap C_2=V(G)\setminus (X_0\cup Y_0)$. As $n^{2-\delta}/2\leq e(\widetilde{H})=|X_0||Y_0|$, we have $|X_0|,|Y_0|\geq n^{1-\delta}/2$. Also, as $e(\widetilde{H})\leq n-n^{2-\delta}/2$, we have $|C_1\cap C_2|=|V(G)\setminus (X_0\cup Y_0)|\geq n^{2-\delta}/2$. Hence, by applying Lemma \ref{lemma:union_of_cliques}, we get that $\surp^*(\widetilde{G})\geq \Omega(n^{2-2\delta})$. But as $G$ and $\widetilde{G}$ differ by less than $O(n^{5/6+\eps/3})$ edges, and $5/6+\eps/3<2-2\delta$, this gives 
$$\surp^*(G)\geq \surp^*(\widetilde{G})-O(n^{5/6+\eps/3})\geq \Omega(n^{2-2\delta})>n^{1+\eps}$$ as well, contradiction.
\end{proof}

After these preparations, we are ready to prove the main theorem of this section.

\begin{proof}[Proof of Theorem \ref{thm:stability}]
Let $\delta_0=10^{-3}$, and let $\eps=\min\{\eps_0/2,10^{-4}\}$, where $\eps_0$ is the constant guaranteed by Lemma \ref{lemma:many_cliques} with respect to $\delta=\delta_0$. We show that $\eps$ suffices. Let $G$ be a graph on $n$ vertices such that $\surp^{*}(G)\leq n^{1+\eps}$. Then Lemma \ref{lemma:many_cliques} guarantees a set $X\subset V(G)$ such that $X$ can be partitioned into the union of cliques of size $n_0=n^{1-\delta_0}$, and $G$ has at most $n^{2-\eps_0}$ edges not in $G[X]$. Let $C_1,\dots,C_I$ be the cliques of size $n_0$ partitioning $X$, then $I=|X|/n_0$. 

Let $\eps_1=\delta_1=0.05$, then $\eps_1+6\delta_1<1/2$. For every $0\leq i<j\leq I$, let $G_{i,j}=G[C_i\cup C_j]$. Then $\surp^*(G_{i,j})\leq \surp^*(G)\leq n^{1+\eps}\leq n_0^{1+\eps_1}$, so Lemma \ref{lemma:bipartite_complement} implies that the bipartite graph between $C_i$ and $C_j$ has either at most $n_0^{2-\delta_1}$, or at least $n_0^2-n_0^{2-\delta_1}$ edges. Define the auxiliary graph $\Gamma$ on vertex set $\{1,\dots,I\}$, where we connect $i$ and $j$ if the bipartite graph between $C_i$ and $C_j$ has at least $n_0^2-n_0^{2-\delta_1}$ edges.

\begin{claim}
$\Gamma$ contains no cherry, that is, three vertices $i,j,k$ such that $ij,jk\in E(\Gamma)$, but $ik\not\in E(\Gamma)$.
\end{claim}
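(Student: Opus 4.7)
The plan is to argue by contradiction: assume $i,j,k$ form a cherry with $ij,jk\in E(\Gamma)$ but $ik\notin E(\Gamma)$, then exhibit a cut in the induced subgraph $H=G[C_i\cup C_j\cup C_k]$ whose surplus is far larger than $n^{1+\eps}$, contradicting $\surp^*(G)\le n^{1+\eps}$ via the monotonicity $\surp^*(G)\ge \surp^*(H)\ge \surp(H)$.

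First, I would collect the edge counts coming out of the assumption. By definition of $\Gamma$, both bipartite graphs $G[C_i,C_j]$ and $G[C_j,C_k]$ have at least $n_0^2-n_0^{2-\delta_1}$ edges. Since $ik\notin E(\Gamma)$, applying Lemma \ref{lemma:bipartite_complement} to $G_{i,k}$ (whose surplus is still at most $n_0^{1+\eps_1}$ by monotonicity) forces $G[C_i,C_k]$ to have at most $n_0^{2-\delta_1}$ edges (the ``near-complete'' alternative would have put $ik$ into $\Gamma$). Adding the three cliques of size $n_0$ gives
\[
e(H)\le 3\binom{n_0}{2}+2n_0^2+n_0^{2-\delta_1}\le \tfrac{7}{2}n_0^2+O(n_0^{2-\delta_1}).
\]

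Next, I would evaluate the specific cut $(C_j,\,C_i\cup C_k)$ in $H$. Its size equals the number of edges crossing between $C_j$ and $C_i\cup C_k$, which by the first observation is at least $2n_0^2-2n_0^{2-\delta_1}$. Consequently,
\[
\surp(H)\ge \bigl(2n_0^2-2n_0^{2-\delta_1}\bigr)-\tfrac{1}{2}e(H)\ge 2n_0^2-\tfrac{7}{4}n_0^2-O(n_0^{2-\delta_1})\ge \tfrac{1}{8}n_0^2,
\]
for $n$ large enough (using $\delta_1=0.05$).

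Finally, combining with $\surp^*(G)\ge \surp^*(H)\ge \surp(H)$ and the bound $\surp^*(G)\le n^{1+\eps}$ yields $n_0^2\le 8n^{1+\eps}$. Since $n_0=n^{1-\delta_0}$ with $\delta_0=10^{-3}$ and $\eps\le 10^{-4}$, we have $n_0^2=n^{2-2\delta_0}=n^{1.998}$, which exceeds $8n^{1+\eps}$ for every sufficiently large $n$, a contradiction. This completes the proof; there is no real obstacle here, the argument is a clean bookkeeping of edges together with one convenient cut, and the gap between $n_0^2$ and $n^{1+\eps}$ is very comfortable because $\delta_0$ is chosen to be tiny compared to $1$.
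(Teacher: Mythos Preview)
Your proof is correct and follows essentially the same approach as the paper: both argue by contradiction, bound $e(G[C_i\cup C_j\cup C_k])$ by roughly $\tfrac{7}{2}n_0^2$, observe that the cut $(C_j,\,C_i\cup C_k)$ has size close to $2n_0^2$, and derive a surplus of order $n_0^2\gg n^{1+\eps}$. The only cosmetic difference is that you re-justify the upper bound on $e(G[C_i,C_k])$ via Lemma~\ref{lemma:bipartite_complement}, whereas the paper had already recorded this dichotomy for all pairs just before stating the Claim.
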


\begin{proof}
Assume to the contrary that $i,j,k\in \{1,\dots,I\}$ forms a cherry. Let $G'$ be the subgraph induced on the vertex set $C_i\cup C_j\cup C_k$. Consider the cut $(C_j,C_i\cup C_k)$ in $G'$. The size of this cut is at least $2n_0^2-2n_0^{2-\delta_1}\geq 1.9n_0^2$. But $e(G')\leq \frac{7}{2}n_0^{2}+n_0^{2-\delta_1}\leq 3.6n_0^2$. Therefore, $n^{1+\eps}\geq \surp^*(G)\geq \surp^*(G')\geq 0.1n_0^2$, contradiction. 
\end{proof}

It is easy to show that graphs containing no cherry are the disjoint union of cliques. Therefore, we can partition $V(\Gamma)$ into sets $I_1,\dots,I_s$ such that $\Gamma[I_i]$ is a clique and there are no edges between $I_i$ and $I_j$ in $\Gamma$. But this gives a partition of $X$ into sets $Y_1,\dots,Y_s$ by setting $Y_a=\bigcup_{i\in I_a}C_i$. Define $\widetilde{G}$ to be the graph on vertex set $V(G)$, where $Y_1,\dots,Y_s$ are cliques, and all edges of $\widetilde{G}$ are contained in one of these cliques.

We prove that $\widetilde{G}$ is $n^{-\eps}$-close to $G$. For $1\leq i<j\leq I$, $G[C_i,C_j]$ and $\widetilde{G}[C_i,C_j]$ differ by at most $n_0^{2-\delta_1}$ edges. Therefore, $G[X]$ and $\widetilde{G}[X]$ differ by at most $$\binom{|X|/n_0}{2}n_0^{2-\delta_1}\leq n^2 n_0^{-\delta_1}\leq n^{2-\delta_1/(1-\delta_0)}\leq n^{2-2\delta_1}$$
edges. Furthermore, there are at most $n^{2-\eps_0}$ edges of $G$ not in $G[X]$, so $G$ and $\widetilde{G}$ differ by at most $n^{2-\eps_0}+n^{2-2\delta_1}\leq n^{2-\eps}$ edges. This finishes the proof.
\end{proof}

\section*{Acknowledgments}
We would like to thank Shengtong Zhang for sharing early versions of his manuscript with us, and Benny Sudakov for many valuable discussions. Also, we would like to thank Victor Falgas-Ravry, whose question ultimately lead to this project.

\end{document}